\def\elsartstyle{%
    \def\normalsize{\@setfontsize\normalsize\@xiipt{14.5}}
    \def\small{\@setfontsize\small\@xipt{13.6}}
    \let\footnotesize=\small
    \def\large{\@setfontsize\large\@xivpt{18}}
    \def\Large{\@setfontsize\Large\@xviipt{22}}
    \skip\@mpfootins = 18\p@ \@plus 2\p@
    \normalsize
} \@ifundefined{square}{}{} \makeatother
\newtheorem{teo}{Theorem}
\newtheorem{proposition}{Proposition}
\newtheorem{cor}{Corollary}
\newtheorem{lema}{Lemma}
\theoremstyle{definition}
\theoremstyle{remark}
\newcommand{\brn}{\begin{eqnarray*}}
\newcommand{\ern}{\end{eqnarray*}}
\newcommand{\prodint}[1]{\left\langle{#1}\right\rangle}
\newcommand{\ov}[1]{\overline{#1}}
\title[Matrix biorthogonal polynomials, associated polynomials and functions of the second kind ]{Matrix biorthogonal polynomials, associated polynomials and functions of the second kind}
\begin{document}

\author[A. Branquinho]{Amilcar  Branquinho}
\address{CMUC 	and Department of Ma\-the\-ma\-tics, University of Coimbra, Apartado 3008, EC Santa Cruz, 3001-501 COIMBRA, Portugal.}
\email{ajplb@mat.uc.pt}
\thanks{AB acknowledges Centro de Matem\'{a}tica da Universidade de Coimbra (CMUC) -- UID/MAT/00324/2013, funded by the Portuguese Government through FCT/MEC and co-funded by the European Regional Development Fund through the Partnership Agreement PT2020.}

\author[J.C. García-Ardila]{Juan Carlos García-Ardila}
\address{Departamento de Matemáticas, Universidad Carlos III de Madrid, Avenida Universidad 30, 28911 Leganés, Spain}
\email{jugarcia@math.uc3m.es }

\author[F. Marcellán]{Francisco Marcellán}
\address{Departamento de Matemáticas, Universidad Carlos III de Madrid and Instituto de Ciencias Matemáticas (ICMAT), Avenida Universidad 30, 28911 Leganés, Spain}
\email{pacomarc@ing.uc3m.es }
\thanks{MM \& FM thanks financial support from the Spanish ``Ministerio de Economía y Competitividad" research project MTM2012-36732-C03-01,  \emph{Ortogonalidad y aproximación; teoría y aplicaciones}}

\subjclass{33C45, 42C05, 47A56 (primary), 41A10  (secondary)}
\keywords{ratio asymptotic; quadrature formulae; Markov functions; matrix biorthogonal polynomials; generalized Chebyshev polynomials}

 \begin{abstract}
In this work the interplay between matrix biorthogonal polynomials with respect to a matrix of linear functionals, the $k$-th associated matrix polynomials and the second kind matrix functions, is studied in terms of quasideterminants. A sort of Poincar\'{e}'s theorem for the ratio of two consecutive matrix functions solutions of a linear difference equation is also presented.
Some new formulas connecting these families of matrix functions are given.
 \end{abstract}

 \maketitle

\section{Introduction} \label{sec:1}

Let $\big\{p_n \big\}_{n\in\mathbb N} \, $ be a sequence of orthonormal polynomials with respect to a probability measure,~$\mu$, supported on an infinite subset of the real line. It is well known that $\big\{ p_n \big\}_{n\in\mathbb N} \, $ satisfies a three term recurrence relation
\begin{gather*}
x \, p_n(x)=a_{n+1} \, p_{n+1}(x)+b_n \, p_n(x) + a_n \, p_{n-1}(x),
 \ \ n\geq 0 \, , 
\end{gather*}
with initial conditions $p_0(x)=1$, $p_{-1}(x)=0$, where $\big(a_n\big)_{n\in\mathbb N}\, $ and $\big(b_n\big)_{n\in\mathbb N} \, $ are sequences of real numbers with $a_n\neq0 $.
If we assume that
$
a_n 
\to a \not = 0$ and
$
b_n 
\to b$,
then Nevai~\cite{Nevai} proved~that the convergence
\begin{gather}\label{du}
\lim_{n\to\infty}\frac{p_n(z)}{p_{n-1}(z)}=\frac{z-b+\sqrt{(z-b)^2-4a^2}}{2a} \, , \ \
z \in \mathbb C \setminus\operatorname{supp} \, (u) \, ,
\end{gather}
holds uniformly on compact subsets of $\mathbb C \setminus \operatorname{supp} \, (\mu)$.

The functions of second kind, $\big\{ q_n \big\}_{n\in\mathbb N} \, $, defined as
\begin{gather*}
q_n (z) =\int\frac{p_n(x)}{z-x} \, d\mu (x) \, , \ n \in \mathbb N \, ,
\ \ z \in \mathbb C \setminus\operatorname{supp} \, (u) \, ,
\end{gather*}
satisfy the same three term recurrence relation as $ \big\{ p_n \big\}_{n\in\mathbb N} \, $, but
with initial conditions $q_{-1}(z)=
{1} \big/ {a_0}$,
$
q_0(z)=\int 
{ d\mu (x)} \big/ ({z-x} ) \, $.

 By the Poincar\'e's theorem for difference equations~\cite{poincare} it is possible conclude~that
\begin{gather}\label{eq11}
\lim_{n\to\infty} \frac{q_n(z)}{q_{n-1}(z)}
= \frac{z-b-\sqrt{(z-b)^2-4a^2}}{2a} \, , \ \ z \in \mathbb C \setminus\operatorname{supp} \, (u) \, ,
\end{gather}
uniformly on compact subsets of~$\mathbb C \setminus \operatorname{supp} \, (\mu)$. From this, Van Assche~\cite{walter} (see also~\cite{Rakhmanov}) showed~that
\begin{gather}\label{eq22}
\lim_{n\to\infty}{p_n(z)} \, {q_{n}(z)}=\frac{1}{\sqrt{(z-b)^2-4a^2}} \, , \ \
z \in \mathbb C \setminus\operatorname{supp} \, (u) \, ,
\end{gather}
and the convergence holds uniformly on compact subsets of $\mathbb C \setminus\operatorname{supp} \, (u) \, $.

Now, taking into account~\cite{Duran3}, we consider a positive definite $N \times N$ matrix of measures and its corresponding sequence of matrix orthonormal polynomials~$ \big\{ P_n \big\}_{n\in\mathbb N} \, $ satisfying a recurrence~relation
\begin{gather*}
x \, P_n(x)=A_{n+1}\, P_{n+1}(x)+B_n \, P_n(x) + A^\top_n P_{n-1} \, , \ n \geq 0\, ,
\end{gather*}
with initial conditions $P_0(x)=I_N $, $ P_{-1}(x)= \pmb 0 $, where, $A_{n}$, are nonsingular matrices and~$B_{n}$ are Hermitian matrices; then the outer ratio asymptotics of two consecutive polynomials belonging to the {\it matrix Nevai class}, i.e.
if
$
A_n 
\to A$
and
$
B_n 
\to B$
with $A$ a nonsingular matrix, then
$\big\{ P_{n-1} \, P^{-1}_n \, A_n^{-1} \big\}_{n \in \mathbb N} \, $
uniformly converges on compact subsets of
$\mathbb C \setminus\Gamma^{(0)}$ to
$
\int 
{dW_{A,B}(y)}\big/ ({x-y})$,
where $\Gamma^{(0)}$ will be defined later, cf.~\eqref{zer}, and
$W_{A,B}$ is the matrix weight for the Chebyshev matrix polynomials~(cf. \cite{Duran1}). A more general case was studied in~\cite[Theorem~\ref{teo4}]{AB_Paco_AM1} for matrix biorthogonal polynomials (cf. also~\cite{AB_Paco_JCGA}).

In the present contribution we are interested to analyze the analogous results in the matrix biorthogonal case (cf.~\cite{CAF_GA_JCGA_MM_FM} for a fresh introduction on matrix biorthogonal polynomials) of those given in~\eqref{eq11} and~\eqref{eq22}.

Observe that in this matrix scenario the Poincar\'e's theorem is no longer valid.
The answers to these problems are given as Corollaries of the main result of this paper (cf. Theorem~\ref{teorema 6}).
Here we present the result for matrices of linear functionals (and biorthogonal polynomials).

The structure of this manuscript is as follows. In Section~\ref{sectio2}, we exhibit the basic theory of linear difference equations on the noncommutative ring of matrices. In Section~\ref{sectio3}, we introduce the concept of matrix of linear functionals and its associated families of matrix biorthogonal polynomials.
Recall that the families of biorthogonal polynomials are solutions of a second order linear difference equation but these are no the unique ones. With this background, in Section~\ref{sectio4}, we present results concerning the independence of solutions for linear difference equations with matrix coefficients, as well as an explicit representation for these solutions.
In~Section~\ref{sectio5}, we study the
outer ratio asymptotics for the second kind matrix functions (see Theorem~\ref{teorema 6}) generalizing for the matrix case the results given in~\eqref{eq11} and~\eqref{eq22}.

\section {Linear difference equations on the
ring of matrices}\label{sectio2}

First of all we will fix some notation. Let $\mathbb{R}$ and $\mathbb{C}$ be the set of complex and real numbers, respectively, and denote by $\mathbb{C} ^{N\times N}$ (respectively, $\mathbb{R} ^{N\times N}$) the linear space of $N\times N$ matrices with complex entries (respectively, the linear space of~$N\times N$ matrices with real entries).

For an arbitrary finite or infinite matrix $A \, $, the matrix $A^\top$ is its transpose.
The matrix $\pmb 0 $ will be understand as the null matrix of size~$N\times N$.

In the sequel, we will use the definition of quasideterminants coming from the last corner of the block matrix to obtain connection formulas between some families of orthogonal polynomials. They constitute a generalization of the determinants when the entries of the matrix belong to a noncommutative ring, and they share several properties with them.

Let $A\in\mathbb{C}^{M\times M}$, $B\in\mathbb{C}^{M\times N}$, $C\in\mathbb{C}^{N\times M}$ and $D\in\mathbb{C}^{N\times N}$, with $A$ a nonsingular matrix. For the $2\times 2$ block matrix
$\left(
\begin{matrix}
A& B \\ C & D
\end{matrix}
\right) \, $,
the {\it last quasideterminants} is defined~by
\begin{gather*}
\Theta_*\left(\begin{matrix}A& B \\ C &D \end{matrix} \right):= D-CA^{-1}B \, .
\end{gather*}
Notice that the last quasideterminant is just the Schur complement of the block~$A \, $.
\begin{proposition}
\label{detpro}
Given the block matrix,
$\left(\begin{matrix}
A & B \\ C & D
\end{matrix}\right) \, $,
where $A,B,C$, and $D$ are matrices of size $N\times N$,~then:

$\phantom{ola}${\rm 1}. 
$
\det \, \left(\begin{matrix} A&B \\ C&D\end{matrix} \right)
=\det \, \left( \begin{matrix} -C&-D\\A&B\end{matrix} \right) \, $.


$\phantom{ola}${\rm 2}. If $A$ is
nonsingular
then
$
\det \, \left( \begin{matrix} A&B\\C&D \end{matrix} \right)
=\det \, (A) \, \det \, (D-C \, A^{-1}B) \, $.

$\phantom{ola}${\rm 3}. If $A$ and $D-C \, A^{-1}B$ are nonsingular matrices, then
\begin{gather*}
\left(\begin{matrix} A&B\\C&D\end{matrix}\right)^{-1}=
\left( \begin{matrix} A^{-1}+A^{-1}B \, (D-C \, A^{-1} B)^{-1}C \, A^{-1}&-A^{-1}B \, (D-C \, A^{-1}B)^{-1} \\
-(D-C \, A^{-1}B)^{-1}C \, A^{-1}&(D-C \, A^{-1}B)^{-1}
 \end{matrix} \right) \, .
\end{gather*}
\end{proposition}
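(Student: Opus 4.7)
The three assertions are all classical identities about block matrices via the Schur complement; I would treat them in order and essentially independently.

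For part 1, the plan is to interpret the right-hand matrix as the left-hand one after a block row operation. Moving the top block of $N$ rows below the bottom block requires $N^{2}$ elementary row transpositions (each of the $N$ rows of $(A\,B)$ must be moved past the $N$ rows of $(C\,D)$), contributing a sign $(-1)^{N^{2}}$. Then multiplying the new top block $(C\,D)$ by $-1$ across its $N$ rows contributes another sign $(-1)^{N}$. Since $N^{2}+N=N(N+1)$ is always even, the total sign is $+1$, and we conclude equality of the determinants.

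For part 2, the plan is block LU factorization. Assuming $A$ is nonsingular, I would write
\begin{gather*}
\left(\begin{matrix}A & B\\ C & D\end{matrix}\right)=\left(\begin{matrix}I_{N}&\pmb 0\\ C A^{-1}& I_{N}\end{matrix}\right)\left(\begin{matrix}A & B\\ \pmb 0 & D-C A^{-1}B\end{matrix}\right),
\end{gather*}
which one checks by carrying out the block product. The first factor has unit lower triangular block form, hence determinant $1$; the second factor has block upper triangular form with diagonal blocks $A$ and $D-CA^{-1}B$, whose determinant is the product $\det(A)\det(D-CA^{-1}B)$. Multiplicativity of $\det$ then gives the identity.

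For part 3, once $A$ and the Schur complement $S=D-C A^{-1}B$ are invertible, the plan is simply to verify by direct block multiplication that the proposed matrix $M$ satisfies both $M\cdot\left(\begin{smallmatrix}A&B\\ C&D\end{smallmatrix}\right)=I_{2N}$ and $\left(\begin{smallmatrix}A&B\\ C&D\end{smallmatrix}\right)\cdot M=I_{2N}$. Each of the four resulting $N\times N$ blocks collapses after cancelling $A^{-1}A=I_{N}$ and $S^{-1}S=I_{N}$, together with the algebraic identity $A^{-1}+A^{-1}B S^{-1}C A^{-1}-A^{-1}B S^{-1}C A^{-1}=A^{-1}$ in the $(1,1)$-block and analogous telescopings in the remaining blocks. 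Alternatively, one may re-derive the formula constructively from the factorization in part 2 by inverting each triangular factor explicitly and multiplying.

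The only delicate point is the sign bookkeeping in part 1; parts 2 and 3 are routine once one commits to the block LU decomposition, so I expect the main obstacle to be nothing more than keeping track of the $(-1)^{N^{2}}$ versus $(-1)^{N}$ contributions in the first assertion.
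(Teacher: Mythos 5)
Your proof is correct in all three parts: the sign bookkeeping in part 1 is sound, since the block swap costs $(-1)^{N^{2}}$ and the negation of $N$ rows costs $(-1)^{N}$, and $N^{2}+N$ is always even; the block LU factorization gives part 2 by multiplicativity of the determinant; and the direct block multiplication in part 3 verifies the inverse. Note, however, that the paper states this proposition without any proof, treating it as a standard fact about Schur complements, so there is no argument in the paper to compare yours against; your factorization route is the classical one and is precisely the identity the paper relies on later when it expands Casorati determinants via Schur complements (as in the proof of Theorem~\ref{teorema2}).
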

Recall that if $R$ is a ring, 
we say that a {\it left module over $R$} is a set $M$ together with two operations
\begin{gather*}
 \oplus : M\times M\rightarrow M \ \text{and} \ \
 \odot : R\times M\rightarrow M \, ,
\end{gather*}
such that for $m,n\in M$ and $a,b\in R$ we have:

{\noindent}$\phantom{ola}$1. 
$(M, \oplus )$ is an Abelian group.
 \\	
$\phantom{ola}$2. 
$(a \oplus b) \odot m= ( a \odot m ) \oplus ( b \odot m )$ \ and \ \ $a \odot (m \oplus n)= ( a \odot m ) \oplus ( a \odot n ) $.
 \\
$\phantom{ola}$3. 
$(a \odot b) \odot m=a \odot (b \odot m)$.

In a similar way, one defines a right module on $R$. If $M$ is a left and right module over $R$, then $M$ is said to be a {\it bimodule}~(cf. \cite{lang,rowen}).

The {\it module} $M$ is said to be a {\it free left} module (respectively, {\it right} module) over $R$ if $M$ admits a basis, that is, there exists a subset $S$ of $M$ such that $S$ is not empty,~$S$ generates $M$, i.e. $M=\operatorname{span} \, (S)$ and $S$ is linearly independent.

 Recall that for matrices $A_k\in \mathbb{C}^{N \times N }$, $0\leqslant k \leqslant n$, with $\det \, (A_n)\neq0$, the matrix $P(x)=A_n \, x^n+A_{n-1} \, x^{n-1}+\cdots +A_1 \, x+A_0$ is said to be a matrix polynomial of degree $n $. In particular, if $A_n=I_N $, i.e. $A_n$ is the identity $N \times N $ matrix, then the polynomial is said to be monic. The set of matrix polynomials with coefficients in $\mathbb{C}^{N \times N }$ will be denoted by $\mathbb{C}^{N \times N }[x]$.

Observe that $\mathbb C^{N\times N}[x]$ with the usual sum and product for matrices is a free
bimodule and, in particular, a left module, on the ring $\mathbb{C}^{N \times N }$ with basis~$\big\{
 x^n I_N
\big\}_{n\in\mathbb N} \, $.
Important submodules of $\mathbb C^{N\times N}[x]$ are the sets $\mathbb C_n^{N \times N }[x]$ of matrix polynomials of degree less than or equal to $n$ with the basis
$\big\{I_N ,
\ldots , x^nI_N \big\}
$ of cardinality $n+1$.

The complex number, $x_0
$,
is said to be a zero of
$P 
$
if $\det \, P(x_0) = 0$. Clearly, 
as a consequence, we have that $P
 $ has at most $n \, N$ zeros.


Now, we consider a sequence of 
matrices $\big(A_{n}\big)_{n\in\mathbb N} \, $ in $\mathbb C^{N \times N }[x]$ and take the following $k$-th order difference equation
\begin{gather}\label{ecuacion}
y_{n+k}+A_{n+k-1} \, y_{n+k-1}+\cdots+A_{n} \, y_{n}={\bf 0}, \ \ n \in \mathbb N \, ,
\end{gather}
with initial conditions
$
y_{0}=c_0,\ldots, y_{k-1}=c_{k-1} \, $,
in  $\mathbb C^{N \times N }[x]$.  We denote by $\big(y_n(c)\big)_{n\in\mathbb N} \, $, which belong to $\mathbb C^{N \times N }[x]$, the solution of~\eqref{ecuacion} with the initial condition $c=\big(c_0,c_1,\ldots, c_{k-1}\big) $.
\begin{proposition}\label{unicidad}
The equation~\eqref{ecuacion} with initial condition $c$ has a unique~solution.
\end{proposition}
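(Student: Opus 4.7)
The plan is to prove both existence and uniqueness by a simple strong induction on $n$, exploiting the fact that the leading coefficient in \eqref{ecuacion} is the identity $I_N$, so the recurrence can be solved explicitly for its most recent term:
\begin{gather*}
y_{n+k}=-A_{n+k-1}\,y_{n+k-1}-\cdots-A_{n}\,y_{n},\qquad n\in\mathbb{N}.
\end{gather*}

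First I would handle \emph{existence}. Define $y_0,\ldots,y_{k-1}$ to be the prescribed initial data $c_0,\ldots,c_{k-1}\in\mathbb{C}^{N\times N}[x]$, and then for $n=0,1,2,\ldots$ define $y_{n+k}$ by the displayed identity above. Because $\mathbb{C}^{N\times N}[x]$ is closed under matrix multiplication and addition, each $y_{n+k}$ lies in $\mathbb{C}^{N\times N}[x]$, so the sequence $\bigl(y_{n}(c)\bigr)_{n\in\mathbb{N}}$ is well defined and clearly satisfies \eqref{ecuacion} and the initial conditions.

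Next I would handle \emph{uniqueness}. Suppose $\bigl(y_n\bigr)_{n\in\mathbb{N}}$ and $\bigl(\tilde y_n\bigr)_{n\in\mathbb{N}}$ are two solutions of \eqref{ecuacion} with the same initial condition $c$, and let $z_n:=y_n-\tilde y_n$. Then $z_0=\cdots=z_{k-1}=\mathbf{0}$ and $\bigl(z_n\bigr)_{n\in\mathbb{N}}$ satisfies the same homogeneous equation \eqref{ecuacion}. I argue by strong induction that $z_n=\mathbf{0}$ for all $n$: the base cases $0\leq n\leq k-1$ are given, and if $z_{n},\ldots,z_{n+k-1}$ all vanish, then the recurrence forces
\begin{gather*}
z_{n+k}=-A_{n+k-1}\,z_{n+k-1}-\cdots-A_{n}\,z_{n}=\mathbf{0}.
\end{gather*}

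There is no real obstacle here; the only thing to notice is that the coefficient of the highest-index term in \eqref{ecuacion} is $I_N$ (equivalently, the recursion is in \emph{normal form}), so no invertibility assumption on the $A_{n+k-1}$ is needed in order to solve forward. Had the leading coefficient been a general matrix polynomial, one would have to impose nonsingularity to get existence and uniqueness; as stated, the proposition is immediate.
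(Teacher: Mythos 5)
Your proof is correct and is essentially the same argument as the paper's: the paper simply remarks that, given the initial condition $c$, the sequence $y_n(c)$ is completely determined, which is exactly your forward-recursion observation (using that the leading coefficient in \eqref{ecuacion} is $I_N$) spelled out with the existence construction and the strong-induction uniqueness step made explicit.
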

\begin{proof}
It is clear from the fact that given an initial condition $c$, $y_n(c)$ is completely determined.
\end{proof}

Now, we introduce the operator $\operatorname L \, $ as follows
\begin{gather*}
\operatorname L \, y_n=\sum_{i=0}^k A_{n+k-i} \, y_{n+k-i} \ \ \text{with} \ \ A_{n+k}=I_N \, .
\end{gather*}
Observe that if
$\, \big( y_n \big)_{n\in\mathbb N} \, $
is a solution of~\eqref{ecuacion}, then $\operatorname L \, y_n = \pmb 0 $. It is easy verify that the operator $\operatorname L \, $ is a {\it right linear operator}, i.e.
\begin{gather*}
\operatorname L \, (y_n \, \alpha+z_n \, \beta)=( \operatorname L \, y_n) \, \alpha+(\operatorname L \, z_n) \, \beta \, , \ \ \alpha,\beta\in \mathbb C^{N \times N }[x] \, .
\end{gather*}
We will denote by~$ \, \mathbb S \, $ the set of solutions of~\eqref{ecuacion}.

Notice that
if $\big(y_n\big)_{n\in\mathbb N} \, $ and $\big(z_n\big)_{n\in\mathbb N} \, $ are solutions of~\eqref{ecuacion}, then since the operator~$\operatorname L \, $ is right linear,
we have $y_n \, \alpha + z_n \, \beta$ is also a solution of~\eqref{ecuacion}.

Moreover, it is clear that $ \, \mathbb S \, $ is an Abelian group under addition and since \begin{gather*}
y_n \, ( \alpha+\beta )= y_n \, \alpha + y_n \, \beta \, \ \mbox{ and } \ \
(y_n \, \alpha ) \, \beta = y_n \, ( \alpha \, \beta ) \, ,
\end{gather*}
 then we can conclude that $\, \mathbb S \, $ is also a right module over the noncommutative ring $C^{N \times N }[x] $.

\begin{proposition}\label{representation}
Let $ \big(y_n(e_0)\big)_{n\in\mathbb N} \, , \, \big(y_n(e_1)\big)_{n\in\mathbb N} \, , \, \ldots \, , \, \big(y_n(e_{k-1})\big)_{n\in\mathbb N} \, $ be the solution of~\eqref{ecuacion} with the initial conditions
\begin{gather*}
e_0=(I_N ,0,\ldots,0),\ e_1=(0,I_N ,\ldots, 0),\ldots, e_{k-1}=(0,0,\ldots, I_N ) \, .
\end{gather*}
Given a solution of~\eqref{ecuacion}, $\big(y_n(c)\big)_{n\in\mathbb N} \, $, with the set of initial conditions given by $c=\big( c_0 \cdots c_{k-1}\big) \, $, then 	
$\big(y_n(c)\big)_{n\in\mathbb N} \,$ can be expressed as a linear combination of $\big(y_n(e_i)\big)_{n\in\mathbb N} \, $, $i=0,\ldots, k-1\, $.
\end{proposition}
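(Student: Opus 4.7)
The plan is to exhibit the linear combination explicitly, verify it satisfies both the difference equation and the prescribed initial conditions, and then invoke uniqueness (Proposition~\ref{unicidad}) to conclude equality with $\big(y_n(c)\big)_{n\in\mathbb N}$.

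More concretely, I would first define the candidate sequence
\begin{gather*}
z_n := \sum_{i=0}^{k-1} y_n(e_i)\, c_i \, , \quad n\in\mathbb N \, ,
\end{gather*}
where the coefficients $c_i$ act on the right. Because each $\big(y_n(e_i)\big)_{n\in\mathbb N}$ lies in $\mathbb S$ and $\operatorname L$ is a right linear operator on $\mathbb C^{N\times N}[x]$, the computation $\operatorname L z_n = \sum_{i} (\operatorname L y_n(e_i)) c_i = \pmb 0$ shows that $\big(z_n\big)_{n\in\mathbb N}\in\mathbb S$.

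Next I would check the initial conditions. For $j=0,1,\ldots,k-1$,
\begin{gather*}
z_j = \sum_{i=0}^{k-1} y_j(e_i)\, c_i = \sum_{i=0}^{k-1} \delta_{ij} I_N\, c_i = c_j \, ,
\end{gather*}
since the $i$-th entry of $e_i$ is $I_N$ and all others are $\pmb 0$. Thus $\big(z_n\big)_{n\in\mathbb N}$ is a solution of~\eqref{ecuacion} with the same initial data as $\big(y_n(c)\big)_{n\in\mathbb N}$.

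Finally, applying Proposition~\ref{unicidad}, which guarantees uniqueness of the solution once the initial conditions are fixed, yields $y_n(c) = z_n = \sum_{i=0}^{k-1} y_n(e_i)\, c_i$ for every $n\in\mathbb N$, which is the desired representation. There is essentially no serious obstacle here; the only point that deserves care is that the $c_i$ must be placed on the right of the $y_n(e_i)$ in order for the right linearity of $\operatorname L$ (and hence the $\mathbb S$-right-module structure) to apply, since $\mathbb C^{N\times N}[x]$ is noncommutative. The same care is needed when matching initial conditions, where one uses $I_N\, c_i = c_i$ rather than any identity relying on commutativity.
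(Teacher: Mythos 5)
Your proof is correct and follows essentially the same route as the paper: define $z_n=\sum_{i=0}^{k-1} y_n(e_i)\,c_i$ with coefficients on the right, use the right-module structure of $\mathbb S$ (equivalently, the right linearity of $\operatorname L$) to see $z_n\in\mathbb S$, match the initial conditions, and conclude by the uniqueness result of Proposition~\ref{unicidad}. No issues to report.
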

\begin{proof}
Let
$
z_n=\sum_{i=0}^{k-1} y_n(e_i) \, c_i \, $.
Since $ \, \mathbb S \, $ is a right module on $\mathbb C^{N \times N }[x] $, then
$
\sum_{i=0}^{k-1}y_n(e_i) \, c_i \in \mathbb S \, $.
Moreover, observe that $z_i=c_i $, for $i=0,\ldots, k-1\, $. The above implies that $\big( z_n \big)_{n\in\mathbb N} \, $ is a solution of~\eqref{ecuacion} with initial condition~$c$, but from Proposition~\ref{unicidad}, $z_n=y_n$ for every $n\in\mathbb N \, $.
\end{proof}

Given a set of functions $ \big\{ f_{i,n} \big\}_{n\in\mathbb N} \, $, $i=0,\ldots, k -1$,
$ f_{i,n}:\mathbb{N}\to \mathbb C^{N \times N }[x] \, $.
The {\it set of functions} $ \big\{ f_{i,n} \big\}_{n\in\mathbb N} \, $,
$i=0,\ldots, k-1\, $,
are said to be {\it linearly independent} if for all $n\in\mathbb N \, $,
\begin{gather}\label{independencia}
\sum_{i=0}^{k-1} f_{i,n} \, \alpha_i=\pmb 0 \, , \ \ \alpha_i\in C^{N \times N }[x], \ \ \text{implies} \ \ \alpha_i=\pmb 0 \, .
\end{gather}

\begin{cor}\label{coro1} The set of solutions
$\big\{ \big(y_n(e_i) \big)_{n\in\mathbb N}:i=0,\ldots, k-1 \big\} \, $ is a basis for~$\, \mathbb S \, $, or equivalently,~$ \, \mathbb S \, $ is a free right module.	
\end{cor}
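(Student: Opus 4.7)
The strategy is essentially routine given what has already been set up: Proposition~\ref{representation} provides the spanning part for free, so the whole task reduces to verifying linear independence in the sense of~\eqref{independencia}. My plan is to isolate the coefficients $\alpha_i$ by evaluating at the initial indices, where the solutions $y_n(e_i)$ take the particularly simple form prescribed by the initial data $e_i$.

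More precisely, suppose that
\begin{gather*}
\sum_{i=0}^{k-1} y_n(e_i) \, \alpha_i = \pmb 0 \quad \text{for every } n\in\mathbb N,
\end{gather*}
with $\alpha_i\in\mathbb C^{N\times N}[x]$. By the definition of $e_i$, the initial values of the solution $\big(y_n(e_i)\big)_{n\in\mathbb N}$ are $y_j(e_i)=\delta_{ij}\,I_N$ for $0\leqslant i,j\leqslant k-1$. Substituting $n=j$ into the identity above and exploiting this Kronecker-type behaviour collapses the sum to the single term $\alpha_j$, which must therefore equal $\pmb 0$. Running $j$ from $0$ to $k-1$ kills every coefficient, establishing linear independence.

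Combining this with Proposition~\ref{representation}, every element of $\mathbb S$ is a right-$\mathbb C^{N\times N}[x]$-linear combination of the $\big(y_n(e_i)\big)_{n\in\mathbb N}$, and such a representation is unique (since the difference of two representations yields a linear dependence whose coefficients must all vanish, or directly by Proposition~\ref{unicidad} applied to the initial condition). Hence $\big\{\big(y_n(e_i)\big)_{n\in\mathbb N}: i=0,\ldots,k-1\big\}$ is a basis of $\mathbb S$, which by definition means that $\mathbb S$ is a free right module over $\mathbb C^{N\times N}[x]$.

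I do not anticipate a genuine obstacle: the only conceptual point to be careful about is that the notion of linear independence here is a right-module condition over the noncommutative ring $\mathbb C^{N\times N}[x]$, so the coefficients $\alpha_i$ must sit on the right and one cannot appeal to determinants or Cramer-style arguments. The proof above avoids this issue entirely by exploiting the projection property $y_j(e_i)=\delta_{ij}I_N$ at the initial indices, which bypasses any need to invert matrix-polynomial combinations.
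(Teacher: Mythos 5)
Your proof is correct and takes essentially the same route as the paper: the paper's own proof simply cites Proposition~\ref{representation} (the spanning part) and leaves the linear-independence check implicit, and your evaluation of the vanishing combination at $n=0,\ldots,k-1$, where $y_j(e_i)=\delta_{ij}\,I_N$, supplies exactly that omitted detail (equivalently, the block Casorati matrix at $\hat n=0$ is the identity, so Theorem~\ref{teorema-indepen} applies). Nothing further is needed.
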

\begin{proof}
This fact follows from Proposition~\ref{representation}.
\end{proof}

Given a set of functions $ \big\{ f_{i,n} \big\}_{n\in\mathbb N} \, $, $i=0,\ldots, k-1 $, we define the {\it block Casorati matrix} as
\begin{gather*}
W(f_{0,n}, \ldots , f_{k-1,n})=\left( \begin{matrix}
f_{0,n}&
\cdots & f_{k-1,n}\\
\vdots &
\ddots & \vdots\\
f_{0,n+k-1} &
\cdots & f_{k-1,n+k-1}	
 \end{matrix} \right) \, .
\end{gather*} 	

\begin{teo}\label{teorema-indepen}
A sufficient condition for the set of functions $\big\{ f_{i,n} \big\}_{n\in\mathbb N} \, $, $ i =0,\ldots, k-1$ be linearly independent is that there exists
$\hat n \in \mathbb N \, $ such that
\begin{gather*}
\det \, W(f_{0,\hat n}, \ldots , f_{k-1,\hat n})\ne 0 \, .
\end{gather*}
\end{teo}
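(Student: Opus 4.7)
The plan is to turn the defining identity of linear independence into a block linear system whose coefficient matrix is exactly the Casorati matrix $W$. Suppose that a right linear combination $\sum_{i=0}^{k-1} f_{i,n}\,\alpha_i = \pmb 0$ holds for every $n\in\mathbb N$, with $\alpha_i \in \mathbb C^{N\times N}[x]$. Specializing to the $k$ consecutive indices $n=\hat n, \hat n+1, \ldots, \hat n+k-1$ and stacking the resulting relations in block column form gives
\begin{gather*}
W(f_{0,\hat n},\ldots, f_{k-1,\hat n})\left(\begin{matrix} \alpha_0 \\ \vdots \\ \alpha_{k-1} \end{matrix}\right)=\pmb 0 \, .
\end{gather*}
Block multiplication on the left respects the order of the factors in each product $f_{i,\hat n+j}\,\alpha_i$, so this equation faithfully encodes the right linear relation in the noncommutative ring $\mathbb C^{N\times N}[x]$.

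Next I would forget the block structure and view $W$ as a $(kN)\times(kN)$ matrix whose entries live in the \emph{commutative} polynomial ring $\mathbb C[x]$, and the stacked column of $\alpha_i$'s as a $(kN)\times N$ matrix with entries in $\mathbb C[x]$. Under the hypothesis $\det W(f_{0,\hat n},\ldots,f_{k-1,\hat n})\not\equiv 0$, the matrix $W$ is invertible over the field of rational functions $\mathbb C(x)$. Multiplying by $W^{-1}$ on the left in $\mathbb C(x)$ forces the stacked matrix of $\alpha_i$'s to vanish, so all of its polynomial entries vanish, and therefore $\alpha_i=\pmb 0$ for $i=0,\ldots,k-1$, proving the linear independence in the sense of~\eqref{independencia}.

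The delicate point is the interplay between the noncommutativity of $\mathbb C^{N\times N}[x]$ (which governs the module structure underlying~\eqref{independencia}) and the commutativity of $\mathbb C[x]$ (which allows one to speak of $\det W$ and of invertibility of $W$ over $\mathbb C(x)$). Once one observes that the block equation $W\cdot[\alpha]=\pmb 0$ preserves the correct left-to-right order of the factors $f_{i,\hat n+j}\,\alpha_i$, and that the entries of $W$ are ordinary scalar polynomials in $x$, the argument reduces to the classical fact that a square polynomial matrix with nonzero determinant has only the trivial null column over the field of rational functions.
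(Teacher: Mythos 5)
Your argument is correct and follows essentially the same route as the paper: you specialize the dependence relation to the $k$ consecutive indices $\hat n,\ldots,\hat n+k-1$, assemble the block Casorati system $W\cdot[\alpha]=\pmb 0$, and conclude from the nonsingularity of $W$ that all $\alpha_i$ vanish. The only difference is that you make explicit why $\det W\not\equiv 0$ over $\mathbb C[x]$ forces the trivial solution (inverting $W$ over $\mathbb C(x)$), a point the paper dispatches with a citation to Horn--Johnson.
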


\begin{proof}
If~\eqref{independencia} holds for some $\hat n \in \mathbb N \, $, then
\begin{gather*}
\left( \begin{matrix}
\pmb 0 \\
\vdots\\
\pmb 0
 \end{matrix} \right)
=\left( \begin{matrix}
f_{0,\hat{n}} &
\cdots & f_{k-1,\hat{n}} \\
\vdots &
\ddots & \vdots \\
f_{0,\hat{n}+k-1} &
\cdots & f_{k-1,\hat{n}+k-1}
 \end{matrix} \right)
\left( \begin{matrix}
\alpha_1 \\
\vdots \\
\alpha_k
 \end{matrix} \right) \, .
\end{gather*}
The above system has a unique solution if and only if $W(f_{1, \hat n}, \ldots , f_{k,\hat n})$ is a nonsingular matrix~(see~\cite{hor1}).
\end{proof}

\section{Matrix biorthogonal polynomials}\label{sectio3}

A {\it sesquilinear form}
on the bimodule $\mathbb C^{N \times N }[x]$, with real variable, is a~map
\begin{align*}
\prodint{\cdot,\cdot}: \mathbb{C}^{N \times N }[x]\times\mathbb{C}^{N \times N }[x]\to \mathbb{C}^{N \times N} \, ,
\end{align*}
such that for any triple $P, Q, R\in \mathbb{C}^{N \times N }[x]$ of matrix polynomials we have for~all~$A,B\in\mathbb{C}^{N \times N}$:

{\noindent}$\phantom{ola}${\rm 1}. 
$\prodint{A \, P(x)+B \, Q(x),R(x)}=A\prodint{P(x),R(x)}+B\prodint{Q(x),R(x)}$; 
 \\
$\phantom{ola}${\rm 2}. 
$\prodint{P(x),A \, Q(x)+B \, R(x)}=\prodint{P(x),Q(x)}A^\top+\prodint{P(x),R(x)}B^\top$. 

If $\prodint{P(t),Q(t)}=\prodint{Q(t),P(t)}^{\top}$, 
$\prodint{\cdot,\cdot}$ is called a {\it symmetric sesquilinear~form}.

Given a matrix of linear functionals, i.e.
\begin{align*}
u=\left( \begin{matrix}
u_{1,1} & \cdots &u_{1,p}\\
\vdots & \ddots &\vdots \\
u_{p,1} & \cdots & u_{p,p}
\end{matrix} \right) \, ,
\end{align*}
where $u_{i,j} \, $ belong to the the dual space of $\mathbb C[x] \, $, we define its associated sesquilinear form $\prodint{P,Q}_u$ as follows
\begin{align*}
\big(\prodint{P,Q}_u\big)_{i,j}:=\sum_{k,l=1}^p \prodint{u_{k,l},P_{i,k}(x) \, Q_{j,l}(x)} \, .
\end{align*}
In this case the support is defined as
$ \operatorname{supp} \, (u):=
\bigcup_{k,l=1}^p \operatorname{supp} \, (u_{k,l}) \, $.

An important property of the sesquilinear form defined 
in terms of a matrix of linear functional is that $\prodint{x \, P(x),Q(x)}=\prodint{P(x) , x \, Q(x)} $.

Let $\big\{ V_n \big\}_{n\in\mathbb N} \, $ and $ \big\{ G_n \big\}_{n\in\mathbb N} \, $ be two sequences of matrix polynomials~satisfying
\begin{gather*}
\prodint{V_n(x),G_m(x)}_u = I_N \, \delta_{n,m} \, , \ n,m \in \mathbb N \, .
\end{gather*}
The sequences of matrix polynomials $ \big\{ V_n \big\}_{n\in\mathbb N} \, $,
$\big\{ G_n \big\}_{n\in\mathbb N} \, $ are said to be {\it biorthogonal with respect~to~$u \, $}.

It is well known, cf. for instance~\cite{AB_Paco_AM2}, that for the sequences of matrix polynomials~$ \big\{ V_n \big\}_{n\in\mathbb N} \, $ and~$\big\{ G_n \big\}_{n\in\mathbb N} \, $ there exist sequences of 
matrices~$ \big(A_n \big)_{n\in\mathbb N} \, $, $\big(B_n \big)_{n\in\mathbb N} \, $, and~$ \big(C_n \big)_{n\in\mathbb N} \, $, with $A_n$ a lower triangular matrix and~$C_n$ a upper triangular matrix, both nonsingular for $n=0,1,2, \ldots $, such~that
 \begin{gather}
\label{eq.1}
x \, V_n (x) = A_n \, V_{n+1}(x) + B_n \, V_n (x) + C_n \, V_{n-1} (x) \, , \\
\label{eq.2}
x \, G^{\top}_n (x) = G^{\top}_{n+1}(x) \, C_{n+1} + G^{\top}_n (x) \, B_n + G^{\top}_{n-1} (x) \, A_{n-1} \, ,
\end{gather}
with initial conditions
$V_0(x)= G^{\top}_0(x)=I_N $ and $V_{-1}(x)= G^{\top}_{-1}(x)=\pmb 0 $.

Observe that from definition, $C_0=I_N $. In the same way $A_{-1}=I_N $. Moreover, the converse is also true, i.e. if we have two sequences of matrix polynomials $ \big\{ V_n \big\}_{n\in\mathbb N} \, $ and $ \big\{ G_n \big\}_{n\in\mathbb N} \, $ satisfying~\eqref{eq.1} and~\eqref{eq.2}, respectively, then there exists a matrix of linear functionals $u$, with respect to they are~biorthogonal.

Now, from the sequences of 
matrices $\big(A_n\big)_{n \in \mathbb N} \, $, $\big( B_n \big)_{n \in \mathbb N} \, $, and $ \big(C_n \big)_{n \in \mathbb N} \, $ with $A_0=I_N $, one defines for each $n\in \mathbb N \, $ the {\it $k$-th associated polynomials $ \big\{ V_n^{(k)} \big\}_{n\in\mathbb N} \, $}
and
$\big\{ G^{(k)}_{n} \big\}_{n\in\mathbb N} \, $ by the recurrence formula, for~$n \in \mathbb N \, $,
\begin{gather}\label{reasociados}
 x \, V^{(k)}_n(x)=A_{n+k} \, V^{(k)}_{n+1}(x)+B_{n+k} \, V^{(k)}_n(x) + C_{n+k} \, V^{(k)}_{n-1}(x) \, , \\
x \, G^{(k)\top}_n (x) = G^{(k)\top}_{n+1}(x) \, C_{n+k+1} + G^{(k)\top}_n (x) \, B_{n+k} + G^{(k)\top}_{n-1} (x) \, A_{n+k-1} \, , \notag
\end{gather}
with initial conditions
$V^{(k)}_{-1}(x)={\bf 0} \, $, $V^{(k)}_0(x)=I_N $ and $G^{(k)}_{-1}(x)={\bf 0} \, $, $G^{(k)}_0(x)=I_N$.

In~\cite{AB_Paco_AM1} is proved that for every $n\in \mathbb N \, $, $V^{(k)}_n (x)$ and $G^{(k)}_{n}(x)$ have the same zeros. Moreover, taking the $N$-block Jacobi matrix associated with the recurrence relation~\eqref{reasociados}, i.e.
\begin{gather*}
J^{(k)}=\left(\begin{matrix}
B_k & A_k & \pmb 0 & \\
C_{k+1} & B_{k+1} & A_{k+1} & \ddots \\
\pmb 0 & C_{k+2}&B_{k+2} & \ddots \\
& \ddots & \ddots & \ddots
\end{matrix}\right) \, ,
\end{gather*}
$J^{(0)}=J$, the zeros of $V^{(k)}_n(x)$ are the eigenvalues of $J^{(k)}_n$, where $J_n$ is the truncated matrix of $J^{(k)}$ with size $n N\times nN$. So,
denoting by
$\Delta^{(k)}_{n}$
the set of zeros of $V^{(k)}_n(x)$ (or equivalently, of $G^{(k)}_n(x)$), we~define
\begin{gather}\label{zer}
\Gamma^{(k)}= \bigcap_{n \in \mathbb N} M^{(k)}_N \ \ \text{where} \ \ M^{(k)}_N=\ov{\bigcup_{n \in \mathbb N} \Delta^{(k)}_n} \, .
\end{gather}
If $A_n$, $B_n $, and $C_n$ converge,
then by the Gershgorin disk theorem, there exists a real number $M>0$ such that $\ov{\bigcup_{n \in \mathbb N} \Delta_n}\subset \pmb{\operatorname{D}}_M$, where $\pmb{\operatorname{D}}_M=\big\{x: |x| \leq M \big\} $. Moreover, since $J^{(k)}$ is a submatrix of $J$, then again from the Gershgorin disk theorem, we have $\bigcup_{k \in \mathbb N} \Gamma^{(k)}\subset\pmb{\operatorname{D}}_M$.

For $y\notin \operatorname{supp} \, (u)$, the corresponding families of {\it second kind functions}, $\big\{Q_n \big\}_{n\in\mathbb N} \, $ and~$\big\{ R_n \big\}_{n\in\mathbb N} \, $, are defined by
\begin{align*}
Q_n (y) = \prodint{V_n (x), \frac{I_N }{y-x}}_{u} \, \ \mbox{ and } \ \ R_n^{\top}(y)= \prodint{\frac{I_N }{y-x},G_n (x)}_u \, .
\end{align*}

Observe that the families of second kind functions $\big\{ Q_n \big\}_{n\in\mathbb N} \, $ and
$\big\{ R_n \big\}_{n\in\mathbb N} \, $ also satisfy the following three term recurrence relations
\begin{gather*}
y \, Q_n (y) =  A_n \, Q_{n+1}(y) + B_n \, Q_n (y) + C_n \, Q_{n-1}(y) \, , \\
y \, R^{\top}_n (y)
 =  R^{\top}_{n+1}(y) \, C_{n+1} + R^{\top}_n (y) \, B_n + R^{\top}_{n-1} (y) \, A_{n-1} \, ,
\end{gather*}
with
$
Q_0(y)=\prodint{I_N ,\frac{I_N }{y-x} }_{u} $,
$
Q_{-1}(y)=C_0^{-1} $,
$
R^{\top}_0(y)=\prodint{\frac{I_N }{y-x},I_N}_{u} $,
and
\linebreak
$
R^{\top}_{-1}(y)=A^{-1}_{-1} $.
\begin{proposition}[Christoffel-Darboux type formulas] \label{pro:cdt}
Let $ \big\{ V_n \big\}_{n\in\mathbb N} \, $ and $\big\{ G_n \big\}_{n\in\mathbb N} \, $ be the sequences of biorthogonal polynomials with respect to $u \, $. Let $ \big\{Q_n \big\}_{n\in\mathbb N} \, $ and~$ \big\{ R_n \big\}_{n\in\mathbb N} \, $ be, respectively, the corresponding families of second kind functions,~then
\begin{gather}
\label{1C}
\phantom{0}
\hspace{-.75cm}
(x-y)\sum_{m=0}^n G^{\top}_m(y) \, Q_m(x)
=G^{\top}_n(y) \, A_n \, Q_{n+1}(x) - G^{\top}_{n+1}(y) \, C_{n+1} \, Q_{n}(x) + I_N \, ,
\end{gather}
and its confluent formula
\begin{gather}\label{2C}
\sum_{m=0}^nG^{\top}_m(x) \, Q_m(x)=(G^{\top}_{n+1}(x))^{\prime} \, C_{n+1} \, Q_{n}(x)-(G^{\top}_n(x))^{\prime} \, A_n \, Q_{n+1}(x) \, .
\end{gather}
Moreover, we get the analogous Christoffel-Darboux and confluent formulas
\begin{gather} \label{3C}
(x-y) \sum_{m=0}^n R^{\top}_m(y) \, V_m(x) =R^{\top}_n(y) \, A_n \, V_{n+1}(x)-R^{\top}_{n+1}(y) \, C_{n+1} \, V_n(x)-I_N \, ,
  \\
\label{4C}
\sum_{m=0}^n R^{\top}_m(x) \, V_m(x)
 =
R^{\top}_n(x) \, A_n \, V^{\prime}_{n+1}(x) - R^{\top}_{n+1}(y) \, C_{n+1} \, V^{\prime}_n(x) \, .
\end{gather}
\end{proposition}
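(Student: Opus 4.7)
The plan is to derive each identity from the three-term recurrences, mimicking the scalar Christoffel-Darboux argument but being careful with the noncommutativity (left versus right multiplication) and with the asymmetric initial conditions for the sequences $\{V_n\},\{G_n\},\{Q_n\},\{R_n\}$.

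For \eqref{1C}, I would multiply the recurrence
\[
x\,Q_m(x)=A_m\,Q_{m+1}(x)+B_m\,Q_m(x)+C_m\,Q_{m-1}(x)
\]
on the left by $G_m^{\top}(y)$, and multiply the recurrence
\[
y\,G_m^{\top}(y)=G_{m+1}^{\top}(y)\,C_{m+1}+G_m^{\top}(y)\,B_m+G_{m-1}^{\top}(y)\,A_{m-1}
\]
on the right by $Q_m(x)$. Subtracting cancels the middle $B_m$-term and yields
\[
(x-y)G_m^{\top}(y)Q_m(x)=\bigl[G_m^{\top}(y)A_m Q_{m+1}(x)-G_{m-1}^{\top}(y)A_{m-1}Q_m(x)\bigr]-\bigl[G_{m+1}^{\top}(y)C_{m+1}Q_m(x)-G_m^{\top}(y)C_m Q_{m-1}(x)\bigr].
\]
Summing $m=0,\dots,n$ produces two telescoping sums; the surviving terms at the top are $G_n^{\top}(y)A_n Q_{n+1}(x)-G_{n+1}^{\top}(y)C_{n+1}Q_n(x)$, while the bottom terms contribute $-G_{-1}^{\top}(y)A_{-1}Q_0(x)+G_0^{\top}(y)C_0Q_{-1}(x)$. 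Plugging in $G_{-1}^{\top}=\pmb 0$, $G_0^{\top}=I_N$, $C_0=I_N$, $Q_{-1}=C_0^{-1}=I_N$ collapses the latter to $I_N$, which is exactly \eqref{1C}.

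For the confluent formula \eqref{2C}, I would differentiate \eqref{1C} with respect to $y$ and then set $y=x$; the left side becomes $-\sum_{m=0}^n G_m^{\top}(x)Q_m(x)$ (since the factor $(x-y)$ vanishes at $y=x$), and the right side becomes $(G_n^{\top}(x))'A_n Q_{n+1}(x)-(G_{n+1}^{\top}(x))'C_{n+1}Q_n(x)$, yielding \eqref{2C} after a sign flip.

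Formulas \eqref{3C} and \eqref{4C} are handled by the symmetric argument: multiply the recurrence for $V_m$ on the left by $R_m^{\top}(y)$, multiply the recurrence for $R_m^{\top}$ on the right by $V_m(x)$, subtract, and telescope. The only change from the $(G,Q)$ case is in the boundary data: now $V_{-1}=\pmb 0$, $V_0=I_N$, $R_{-1}^{\top}=A_{-1}^{-1}=I_N$, $A_{-1}=I_N$, so the boundary contribution is $-R_{-1}^{\top}(y)A_{-1}V_0(x)=-I_N$, which accounts for the sign change to $-I_N$ in \eqref{3C}. The confluent version \eqref{4C} then follows by differentiating in $y$ and specializing $y=x$.

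The computations are routine; the genuine care is bookkeeping. The main subtle point is keeping left/right multiplications in the correct order throughout (since $\mathbb C^{N\times N}[x]$ is noncommutative) so that the $B_m$ term cancels cleanly and the two telescoping sums combine properly, and then correctly identifying $A_{-1}=I_N$, $C_0=I_N$, $Q_{-1}=I_N$, $R_{-1}^{\top}=I_N$ from the initial data so that the inhomogeneous $\pm I_N$ terms emerge with the right sign.
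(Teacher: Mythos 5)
Your derivation of \eqref{1C} and \eqref{3C} is exactly the paper's: left/right multiply the two recurrences so the $B_m$ terms cancel on subtraction, telescope, and read off the boundary contributions from $G_{-1}^\top=V_{-1}=\pmb 0$, $G_0^\top=V_0=I_N$, $C_0=A_{-1}=I_N$, $Q_{-1}=C_0^{-1}$, $R_{-1}^\top=A_{-1}^{-1}$; the $+I_N$ and $-I_N$ come out with the signs you state. For the confluent formulas the paper rewrites the right-hand side with difference quotients (using that it vanishes at $x=y$) and lets the variables coalesce, while you differentiate the two-variable identity and then set $y=x$; these are the same limit in substance, and your $y$-differentiation of \eqref{1C} in fact lands directly on the stated form \eqref{2C}, with the derivatives on the $G$'s (the paper's own displayed computation actually produces the variant with derivatives on the $Q$'s). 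The one slip is \eqref{4C}: the stated formula carries derivatives on the $V$'s, i.e.\ on the $x$-dependent factors, so you must differentiate \eqref{3C} with respect to $x$ and then set $y=x$ (no sign flip is needed in that case); differentiating in $y$, as you wrote, yields the true but different identity $\sum_{m=0}^n R^\top_m(x)\,V_m(x)=(R^\top_{n+1}(x))'\,C_{n+1}\,V_n(x)-(R^\top_n(x))'\,A_n\,V_{n+1}(x)$. This is a bookkeeping correction, not a gap in the method.
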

\begin{proof}
We only prove~\eqref{1C} and~\eqref{2C}. The formulas~\eqref{3C} and~\eqref{4C} follow in a similar way. From recurrence formulas for $\big\{ Q_m \big\}_{m \in \mathbb N} \, $ (respectively, $\big\{ G_m \big\}_{m \in \mathbb N}$) multiplied on the left by $G_m^{\top}$ (respectively, multiplied on the right by $\big\{ Q_m \big\}_{m \in \mathbb N}$), we have
\begin{align}
 \notag
 & x \, G^{\top}_m (y) \, Q_m (x)
 \\
\label{2}
 & \phantom{ola}
= G^{\top}_m (y) \, A_m \, Q_{m+1}(x)
+ G^{\top}_m (y) \, B_m \, Q_m (x) + G^{\top}_m (y) \, C_m \, Q_{m-1}(x) \, , \\
\notag
 & y \, G^{\top}_m (y) \, Q_m (x)
 \\
\label{1.5}
 & \phantom{ola}
= G^{\top}_{m+1}(y) \, C_{m+1} \, Q_m (x)
 + G^{\top}_m (y) \, B_m \, Q_m (x) + G^{\top}_{m-1} (y) \, A_{m-1} \, Q_m (x) \, .
\end{align}
From here, if we subtract~\eqref{1.5} from~\eqref{2}
\begin{multline*}
(x-y) \, G^{\top}_m (y) \, Q_m (x)=
\big(
G^{\top}_m (y) \, A_m \, Q_{m+1}(x) - G^{\top}_{m-1} (y) \, A_{m-1} \, Q_m (x)
\big)
 \\
- \big(
G^{\top}_{m+1}(y) \, C_{m+1} \, Q_m (x) - G^{\top}_m (y) \, C_m \, Q_{m-1}(x)
 \big) \, .
\end{multline*}
Summing the later from $0$ to $n$ and taking into account the initial conditions, the result follows. To show the confluent formula notice that
\begin{gather*}
G^{\top}_n(y) \, A_n \, Q_{n+1}(y) - G^{\top}_{n+1}(y) \, C_{n+1} \, Q_{n}(y) + I_N = \pmb 0 \, .
\end{gather*}
Now, observe that the Christoffel-Darboux formula can successively be rewritten~as
\begin{align*}
 & \sum_{m=0}^n G^{\top}_m(y) \, Q_m(x)
=
\frac{G^{\top}_{n+1}(y) \, C_{n+1} \, Q_{n}(y)-G^{\top}_n(y) \, A_n \, Q_{n+1}(y)+C_0^{-1}}{x-y} 
     \\
& \phantom{olaolaolao} +G^{\top}_n(y) \, A_n \, \frac{Q_{n+1}(x)-Q_{n+1}(y)}{x-y}
-G^{\top}_{n+1}(y) \, C_{n+1}\frac{Q_{n}(x)-Q_n(y)}{x-y}
 \, , \\
& 
 \phantom{olaolao}=
G^{\top}_n(y) \, A_n \frac{Q_{n+1}(x)-Q_{n+1}(y)}{x-y}-G^{\top}_{n+1}(y) \, C_{n+1}\frac{Q_{n}(x)-Q_n(y)}{x-y} \, ;
\end{align*}
and taking $x\to y$ we get the desired result.
\end{proof}
Now, we state a sort of reciprocal of Proposition~\ref{pro:cdt}.
\begin{proposition}
Suppose that we have two sequences, $\big\{ V_n \big\}_{n\in\mathbb N} \, $ and $ \big\{ G_n \big\}_{n\in\mathbb N} \, $, of matrix polynomials, and two matrices of linear functionals $u^1$, 
$u^2$ such that
$
\prodint{V_n,I_N }_{u^1}=\delta_{n,0}$ and
$
\prodint{I_N ,G_n}_{u^2}=\delta_{n,0}$, $n \in \mathbb N \, $. We define 
the matrix functions
$
Q_n (y) = \prodint{V_n (x), \frac{I_N }{y-x}}_{u^1}$,
$
R_n^{\top} (y)= \prodint{\frac{I_N }{y-x},G_n (x)}_{u^2} $ and we assume that~\eqref{1C} and~\eqref{3C} are satisfied, then $u^1 \equiv u^2 $.
\end{proposition}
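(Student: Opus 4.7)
The plan is to run the equivalence between biorthogonality and the Christoffel--Darboux identity in reverse: $(1C)$ and $(3C)$, together with the given normalizations, will be shown to imply that $\{V_{n}\}$ and $\{G_{m}\}$ are biorthogonal with respect to each of $u^{1}$ and $u^{2}$, and a spanning argument then forces $u^{1}\equiv u^{2}$.

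The first step is to extract three-term recurrences with a common triple $(A_{n},B_{n},C_{n})$ for all four sequences $V_{n},G_{m}^{\top},Q_{n},R_{m}^{\top}$. Subtracting $(1C)_{n-1}$ from $(1C)_{n}$ and regrouping produces the factored identity $G_{n}^{\top}(y)\Phi_{n}(x)=\Psi_{n}(y)Q_{n}(x)$ with $\Phi_{n}(x)=xQ_{n}(x)-A_{n}Q_{n+1}(x)-C_{n}Q_{n-1}(x)$ and $\Psi_{n}(y)=yG_{n}^{\top}(y)-G_{n+1}^{\top}(y)C_{n+1}-G_{n-1}^{\top}(y)A_{n-1}$. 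Since $G_{n}^{\top}(y)$ has a nonsingular leading coefficient (invertible for generic $y$) and $Q_{n}(x)$ is not identically zero, separating variables forces the existence of a constant matrix $B_{n}$ with $\Phi_{n}(x)=B_{n}Q_{n}(x)$ and $\Psi_{n}(y)=G_{n}^{\top}(y)B_{n}$. This is precisely the three-term recurrence for $Q_{n}$ and $G_{m}^{\top}$; the symmetric manoeuvre on $(3C)$ yields the recurrences for $V_{n}$ and $R_{m}^{\top}$ with the same $(A_{n},B_{n},C_{n})$.

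Set $M_{n,m}^{(i)}:=\prodint{V_{n},G_{m}}_{u^{i}}$. The recurrence for $V_{n}$ with property 1 gives $\prodint{xV_{n},G_{m}}_{u^{i}}=A_{n}M_{n+1,m}^{(i)}+B_{n}M_{n,m}^{(i)}+C_{n}M_{n-1,m}^{(i)}$; transposing the $G_{m}^{\top}$ recurrence and using property 2 gives $\prodint{V_{n},xG_{m}}_{u^{i}}=M_{n,m+1}^{(i)}C_{m+1}+M_{n,m}^{(i)}B_{m}+M_{n,m-1}^{(i)}A_{m-1}$. The identity $\prodint{xP,Q}_{u^{i}}=\prodint{P,xQ}_{u^{i}}$, which holds for every sesquilinear form coming from a matrix of linear functionals, equates these two expressions. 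The boundary data are $M_{n,0}^{(1)}=\delta_{n,0}I_{N}$ (from the hypothesis together with $G_{0}=I_{N}$) and $M_{0,m}^{(2)}=\delta_{0,m}I_{N}$. Induction on $m$ for the first, isolating $M_{n,m+1}^{(1)}C_{m+1}$ on one side and using nonsingularity of $C_{m+1}$, yields $M_{n,m}^{(1)}=\delta_{n,m}I_{N}$: the induction hypothesis turns each $M_{\cdot,k}^{(1)}$ into a scalar multiple of $I_{N}$, so $B_{n}$ and $M_{n,m}^{(1)}$ commute, and the Kronecker-delta combinatorics collapse the right-hand side to $\delta_{n,m+1}C_{m+1}$. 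The symmetric induction on $n$, using nonsingularity of $A_{n}$, gives $M_{n,m}^{(2)}=\delta_{n,m}I_{N}$.

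Because $V_{n}$ and $G_{m}$ have nonsingular leading coefficients, each is a basis of $\mathbb{C}^{N\times N}[x]$ as a left module over $\mathbb{C}^{N\times N}$. Any $P,Q\in\mathbb{C}^{N\times N}[x]$ therefore expand as $P=\sum_{n}\alpha_{n}V_{n}$ and $Q=\sum_{m}d_{m}G_{m}$ with constant matrix coefficients, and properties 1 and 2 combined with the biorthogonality just established give $\prodint{P,Q}_{u^{i}}=\sum_{n}\alpha_{n}d_{n}^{\top}$, an expression that does not depend on $i$. Testing against matrix-unit monomials $E_{ij}t^{r}$ then identifies every moment of every entry $u^{i}_{k,l}$, so $u^{1}\equiv u^{2}$. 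The only genuinely delicate point is the variable-separation that produces $B_{n}$: it requires the nonsingularity arguments above to exclude the degenerate cases in which $G_{n}^{\top}$ is never invertible or $Q_{n}$ vanishes identically. Once $B_{n}$ has been identified the remainder of the proof is a transparent double induction.
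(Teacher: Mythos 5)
Your route is genuinely different from the paper's: you recover the three-term recurrences by differencing \eqref{1C} (resp.\ \eqref{3C}) in $n$ and separating variables, then establish full biorthogonality $\langle V_n,G_m\rangle_{u^i}=\delta_{n,m}I_N$ by a double induction on the moments, and conclude by a spanning argument; the paper instead differentiates the confluent form of \eqref{1C}, integrates so that the recurrence coefficient appears as a constant of integration, proves the two resulting $B$-sequences coincide, and then invokes a Favard-type theorem. Your first step is in fact a cleaner derivation of the recurrences than the paper's. But there is a genuine gap where you assert that the manoeuvre on \eqref{3C} yields recurrences for $V_n$ and $R_m^\top$ ``with the same $(A_n,B_n,C_n)$'': the $A_n$, $C_n$ are common to \eqref{1C} and \eqref{3C} by notation, but the separation constant produced from \eqref{3C}, say $B_n'$, has no a priori reason to equal the $B_n$ produced from \eqref{1C}. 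This is not cosmetic: in your induction for $M^{(1)}_{n,m}=\langle V_n,G_m\rangle_{u^1}$ the diagonal case $n=m$ gives $M^{(1)}_{m,m+1}\,C_{m+1}=B_m'-B_m$, so the induction goes through exactly when the two sequences coincide. This identification is the analogue of the step the paper proves explicitly ($\widetilde B_n=B_n$), and you must supply it; for instance, pairing the $V$-recurrence with the Cauchy kernel against $u^1$, and using $\langle x\,V_n(x),\tfrac{I_N}{y-x}\rangle_{u^1}=y\,Q_n(y)-\delta_{n,0}I_N$ together with the $Q$-recurrence from \eqref{1C}, yields $(B_n'-B_n)\,Q_n(y)\equiv\pmb 0$ for $n\ge 1$, whence $B_n'=B_n$ once $Q_n$ is known to be nondegenerate.

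Two smaller points. In the separation step, ``$Q_n$ not identically zero'' does not allow you to cancel it in $\big(G_n^\top(y)B_n-\Psi_n(y)\big)Q_n(x)=\pmb 0$: you need the columns of $Q_n(x)$, as $x$ varies, to span $\mathbb{C}^N$ (e.g.\ $\det Q_n\not\equiv 0$), and you need $\det G_n\not\equiv 0$ to define $B_n$ at all; these are implicit standing assumptions of the paper (whose own proof inverts $G_n^\top$ and $Q_n$ freely), but they do not follow from the hypotheses as stated and should be flagged, since the same nondegeneracy is what rescues the $B_n'=B_n$ step above. Likewise your boundary data $M^{(1)}_{n,0}=\delta_{n,0}I_N$ and $M^{(2)}_{0,m}=\delta_{0,m}I_N$ use $G_0=I_N$ and $V_0=I_N$, which is the paper's convention but is not part of the statement. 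With these repairs your argument is a valid and more self-contained alternative: the moment induction plus spanning replaces the appeal to a matrix Favard theorem, which the paper leaves rather implicit in its final step.
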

\begin{proof}
Observe that from the confluent formula
\begin{gather*}
\sum_{m=0}^nG^{\top}_m(x) \, Q_m(x)=\sum_{m=0}^{n-1}G^{\top}_m(x) \, Q_m(x) +G^{\top}_n(x) \, Q_n(x) \, ;
\end{gather*}
we get using~\eqref{1C}
\begin{multline*}
G^{\top}_n(x) \, Q_n(x)=\big(
(G^{\top}_{n+1}(x))^{\prime} \, C_{n+1}+(G^{\top}_{n-1}(x))^{\prime} \, A_{n-1}
\big) \, Q_{n}(x)
 \\
-(G^{\top}_n(x))^{\prime} \, \big(A_n \, Q_{n+1}(x)+C_{n} \, Q_{n-1}(x)\big) \, .
\end{multline*}
or, equivalently,
\begin{multline}\label{rew}
I_N =G^{-\top}_n(x) \, \big(G^{\top}_{n+1}(x) \, C_{n+1} + G^{\top}_{n-1}(x) \, A_{n-1}\big)^{\prime} \, Q_{n}(x)\\
-G^{-\top}_n(x) \, (G_n^{\top}(x))^{\prime} \, \big(A_n \, Q_{n+1}(x)+C_{n} \, Q_{n-1}(x)\big) \, Q^{-1}_n(x) \, .
\end{multline}
Now, for every $n\in\mathbb N \, $,
\begin{gather}\label{ide}
I_N =G^{\top}_{n+1}(x) \, C_{n+1} \, Q_n(x)-G^{\top}_{n}(x) \, A_{n+1} \, Q_{n+1}(x) \, ,
\end{gather}
 and as
$(G^{-\top}_n(x))^{\prime}=-G^{-\top}_n(x) \, (G^{\top}_n(x))^{\prime}\, G^{-\top}_n(x) \, $,
we can rewrite~\eqref{rew} as~follows
\begin{multline*}
I_N
=G^{-\top}_n(x) \, \big(G^{\top}_{n+1}(x) \, C_{n+1}+G^{\top}_{n-1}(x) \, A_{n-1}\big)^{\prime} \, Q_{n}(x)
 \\ +(G^{-\top}_n(x))^{\prime}\big( G^{\top}_{n+1}(x) \, C_{n+1}+G^{\top}_{n-1}(x) \, A_{n-1}\big) \, ,
\end{multline*}
and so
$ I_N =\Big(G^{-\top}_n(x) \, \big(G^{\top}_{n+1}(x) \, C_{n+1}+G^{\top}_{n-1}(x) \, A_{n-1}\big)\Big)^{\prime} \, $.
Integrating the above with respect to the variable $x$, we get that $\big\{ G_n \big\}_{n\in\mathbb N} \, $ satisfies the following recurrence relation
\begin{gather}\label{e1}
G^{\top}_n(x) \, (xI-B_n)=G^{\top}_{n+1}(x) \, C_{n+1}+G^{\top}_{n-1}(x) \, A_{n-1} \, .
\end{gather}
A similar procedure for $ \big\{ Q_n \big\}_{n\in\mathbb N} \, $ yield
\begin{gather}\label{e2}
(xI-\widetilde B_n ) \, Q_n (x) = A_n \, Q_{n+1}(x) + C_n \, Q_{n-1} (x) \, .
\end{gather}
From~\eqref{ide},~\eqref{e1} and~\eqref{e2}, we obtain
$
\sum_{k=0}^n G^\top(x) \, (\widetilde B_k-B_k) \, Q_n(x)={\bf 0} \, $,
and this implies that $\widetilde B_n=B_n$, for every $n\in\mathbb N \, $.

Since $ \big\{ Q_n \big\}_{n\in\mathbb N} \, $ and $ \big\{ V_n \big\}_{n\in\mathbb N} \, $ satisfy the same recurrence relation (with different initial conditions),
from the Favard's theorem we can conclude, that there exist a matrix of linear functionals such that $ \big\{ V_n \big\}_{n\in\mathbb N} \, $ and $ \big\{G_n \big\}_{n\in\mathbb N} \, $ are biorthogonal.
\end{proof}

\begin{lema}\label{lema1}
For every $n \in \mathbb N \, $,
$
Q_{n-1}(x) \, G_{n-1}^{\top}(x)-V_{n-1}(x) \, R_{n-1}^{\top}(x)=\pmb 0
 \, $.
\end{lema}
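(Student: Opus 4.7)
The idea is to rewrite both $Q_{n-1}G_{n-1}^\top$ and $V_{n-1}R_{n-1}^\top$ as the same entry of $\prodint{\cdot,\cdot}_u$ applied to $V_{n-1}$, $G_{n-1}$ and the scalar kernel $1/(x-t)$, and then to observe that the entry is the same regardless of which slot one puts $1/(x-t)$ in. The case $n = 0$ is immediate from the initial conditions $V_{-1} = G_{-1} = \pmb 0$, so I fix $n \geq 1$ and write $m = n-1$.

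First, using property~(2) of the sesquilinear form with $A = G_m(x)$, I would pull the constant matrix $G_m^\top(x)$ inside $Q_m(x) = \prodint{V_m(t), I_N/(x-t)}_u$, and, using property~(1) with $A = V_m(x)$, pull $V_m(x)$ inside $R_m^\top(x) = \prodint{I_N/(x-t), G_m(t)}_u$, obtaining
\begin{equation*}
Q_m(x)\,G_m^\top(x) = \prodint{V_m(t),\frac{G_m(x)}{x-t}}_{u},\qquad V_m(x)\,R_m^\top(x) = \prodint{\frac{V_m(x)}{x-t},G_m(t)}_{u}.
\end{equation*}
Next I would split $G_m(x) = G_m(t) + (G_m(x)-G_m(t))$ and $V_m(x) = V_m(t) + (V_m(x)-V_m(t))$; the remainders $(G_m(x)-G_m(t))/(x-t)$ and $(V_m(x)-V_m(t))/(x-t)$ are matrix polynomials in $t$ of degree at most $m-1$. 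Because the leading coefficients of $G_k$ and $V_k$ are nonsingular, any such polynomial admits a left $\mathbb{C}^{N\times N}$-linear expansion $\sum_{k=0}^{m-1} D_k G_k(t)$ or $\sum_{k=0}^{m-1} D_k V_k(t)$. Property~(2) together with $\prodint{V_m,G_k}_u = \pmb 0$ for $k<m$ then kills the first remainder, and property~(1) together with $\prodint{V_k,G_m}_u = \pmb 0$ kills the second, leaving
\begin{equation*}
Q_m(x)\,G_m^\top(x) = \prodint{V_m(t),\frac{G_m(t)}{x-t}}_{u},\qquad V_m(x)\,R_m^\top(x) = \prodint{\frac{V_m(t)}{x-t},G_m(t)}_{u}.
\end{equation*}

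Finally, unfolding the definition of $\prodint{\cdot,\cdot}_u$, each $(i,j)$-entry of both right-hand sides equals $\sum_{k,l=1}^N \prodint{u_{k,l}, V_{m,(i,k)}(t)\,G_{m,(j,l)}(t)/(x-t)}$, since inside every scalar functional $u_{k,l}$ the factor $1/(x-t)$ commutes with the polynomial entries. Hence the two products coincide and the lemma follows. The only delicate point is the promotion of the scalar biorthogonality to the annihilation of an arbitrary matrix polynomial of degree less than $m$: it relies on $\{G_0,\dots,G_{m-1}\}$ (resp.\ $\{V_0,\dots,V_{m-1}\}$) being a basis of such polynomials as a left $\mathbb{C}^{N\times N}$-module, which in turn follows from the nonsingularity of the leading coefficients of the $G_k$'s (resp.\ $V_k$'s) implied by the nonsingularity of the $A_n$ and $C_n$ in the three-term recurrences.
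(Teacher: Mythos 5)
Your argument is correct and is essentially the paper's own proof: pull the constant matrices $G_{n-1}(x)$ and $V_{n-1}(x)$ inside the sesquilinear form, reduce to the difference quotients $\frac{G_{n-1}(x)-G_{n-1}(t)}{x-t}$ and $\frac{V_{n-1}(x)-V_{n-1}(t)}{x-t}$, which are killed by biorthogonality, and use that the scalar kernel $\frac{1}{x-t}$ can be placed in either slot of $\prodint{\cdot,\cdot}_u$. You merely make explicit the steps the paper compresses into ``from the orthogonality conditions the result follows'' (the basis expansion with nonsingular leading coefficients and the entrywise kernel-swap), so no substantive difference.
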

\begin{proof}
From the definition of second kind functions we get 	
\begin{multline*}
Q_{n-1}(x) \, G^{\top}_{n-1}(x) - V_{n-1}(x) \, R^{\top}_{n-1}(x)
 \\
=\prodint{ V_{n-1}(y),\frac{G_{n-1}(x)}{x-y}}_u-\prodint{ \frac{V_{n-1}(x)}{x-y},G_{n-1}(y)}_u
 \\
=\prodint {V_{n-1}(y), \frac{G_{n-1}(x)-G_{n-1}(y)}{x-y}}_u-\prodint{ \frac{V_{n-1}(y)-V_{n-1}(x)}{x-y},G_{n-1}(y)}_u
 \, ,
\end{multline*}
and from the orthogonality conditions 
the result follows.
\end{proof}
\begin{proposition}[Liouville-Ostrogradski type formulas]
Let $ \big\{ V_n \big\}_{n\in\mathbb N} \, $, 
$\big\{ G_n \big\}_{n\in\mathbb N} \, $ be the sequences of biorthogonal polynomials with respect to a matrix of linear functionals $u$ and let $\big\{ Q_n \big\}_{n\in\mathbb N} \, $ and $\big\{ R_n \big\}_{n\in\mathbb N} \, $ be their respective sequences of second kind functions,~then
\begin{gather}
\label{os1}
Q_{n-1}(x) \, G^{\top}_{n}(x)-V_{n-1}(x) \, R^{\top}_{n}(x)
 = C_{n}^{-1} \, , \\[1.25pt]
\label{os2}
V_{n}(x) \, R^{\top}_{n-1}(x)-Q_{n}(x) \, G^{\top}_{n-1}(x)
 = A_{n-1}^{-1} \, .
\end{gather}
\end{proposition}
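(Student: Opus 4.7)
The plan is to establish~\eqref{os1} and~\eqref{os2} jointly by induction on $n$, using Lemma~\ref{lema1} at every step to annihilate a ``diagonal'' term. For the base case $n=0$ I would substitute the initial conditions $V_{-1}=G^{\top}_{-1}=\pmb 0$, $V_0=G^{\top}_0=I_N$, $Q_{-1}=C_0^{-1}$, $R^{\top}_{-1}=A_{-1}^{-1}$ directly; both identities then collapse to tautologies.

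For the inductive step producing~\eqref{os1} at level $n$ from~\eqref{os2} at level $n-1$, I would rewrite~\eqref{eq.2} at index $n-1$ as $G^{\top}_n C_n = G^{\top}_{n-1}(xI_N-B_{n-1}) - G^{\top}_{n-2} A_{n-2}$, and analogously $R^{\top}_n C_n = R^{\top}_{n-1}(xI_N-B_{n-1}) - R^{\top}_{n-2} A_{n-2}$. Left-multiplying the first identity by $Q_{n-1}$, the second by $V_{n-1}$, and subtracting yields
\begin{multline*}
\bigl(Q_{n-1}G^{\top}_n - V_{n-1}R^{\top}_n\bigr)C_n
 = \bigl(Q_{n-1}G^{\top}_{n-1} - V_{n-1}R^{\top}_{n-1}\bigr)(xI_N-B_{n-1})\\
 - \bigl(Q_{n-1}G^{\top}_{n-2} - V_{n-1}R^{\top}_{n-2}\bigr)A_{n-2}.
\end{multline*}
Lemma~\ref{lema1} kills the first bracket on the right, while the second bracket is $-(V_{n-1}R^{\top}_{n-2} - Q_{n-1}G^{\top}_{n-2})$, which by the inductive hypothesis~\eqref{os2} at $n-1$ equals $-A_{n-2}^{-1}$; the right-hand side is therefore $I_N$, yielding~\eqref{os1}. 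The mirror step — deriving~\eqref{os2} at $n$ from~\eqref{os1} at $n-1$ — runs by isolating $A_{n-1}V_n = (xI_N-B_{n-1})V_{n-1} - C_{n-1}V_{n-2}$ and $A_{n-1}Q_n = (xI_N-B_{n-1})Q_{n-1} - C_{n-1}Q_{n-2}$ from~\eqref{eq.1} at index $n-1$, right-multiplying by $R^{\top}_{n-1}$ and $G^{\top}_{n-1}$ respectively, subtracting, and invoking Lemma~\ref{lema1} to obtain $A_{n-1}(V_nR^{\top}_{n-1} - Q_nG^{\top}_{n-1}) = C_{n-1}(Q_{n-2}G^{\top}_{n-1} - V_{n-2}R^{\top}_{n-1}) = I_N$ by the hypothesis. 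Combining the two alternating steps with the base case closes the induction.

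The main obstacle is purely bookkeeping: since matrix factors do not commute, one must orchestrate the left/right multiplications so that the $(xI_N-B_{n-1})$-term is exactly what Lemma~\ref{lema1} annihilates, and so that the surviving factors carry the correct non-commutative ordering. The asymmetry between $C_n$ acting on the right in~\eqref{eq.2} and $A_{n-1}$ acting on the left in~\eqref{eq.1} is what forces the two identities into two mirrored inductive steps instead of a single uniform one.
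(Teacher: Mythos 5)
Your argument is correct and is essentially the paper's own proof: both rest on induction, the three-term recurrences \eqref{eq.1}--\eqref{eq.2} rewritten to isolate the highest-index term, and Lemma~\ref{lema1} to annihilate the $(xI_N-B_{n-1})$ bracket. The only difference is organizational --- the paper runs a standalone induction on \eqref{os1} (applying the recurrence a second time to reduce to the hypothesis at level $n-2$) and declares \eqref{os2} analogous, whereas you couple the two identities in an alternating induction, each feeding the other, which is if anything slightly tidier.
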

\begin{proof}
We will prove~\eqref{os1} and~\eqref{os2} follows by using analogous arguments. We proceed by induction. For $n=0$ the result is obtained from initial conditions. Suppose now that
\begin{gather*}
Q_{k-1}(x) \, G^{\top}_{k}(x)-V_{k-1}(x) \, R^{\top}_{k}(x)=C_{k}^{-1} \, ,
\ k=0,1 ,\ldots , n-1 \, .
\end{gather*}
Then, from the recurrence relation
for $G^{\top}_n $ and~$R^{\top}_{n}$
\begin{multline*}
Q_{n-1}(x) \, G^{\top}_{n}(x)-V_{n-1}(x) \, R^{\top}_{n}(x)
= \big(V_{n-1} \, R^{\top}_{n-1}-Q_{n-1} \, G^{\top}_{n-2} \big) \, A_{n-2} \, C_n^{-1}
 \\
+ \big(Q_{n-1}(x) \, G^{\top}_{n-1}-V_{n-1} \, R^{\top}_{n-1}\big) \, (x-B_{n-1}) \, C_n^{-1} \, .
\end{multline*}
Thus from Lemma~\ref{lema1},
\begin{gather*}
\phantom{o}
\hspace{-.5cm}
Q_{n-1}(x) \, G^{\top}_{n}(x)-V_{n-1}(x) \, R^{\top}_{n}(x)
=\big(V_{n-1}(x) \, R^{\top}_{n-1}(x)-Q_{n-1}(x) \, G^{\top}_{n-2}(x)\big) \, A_{n-2} \, C_n^{-1} .
\end{gather*}
If now we use the recurrence formulas for $V_{n-1} $ and $Q_{n-1} $, then from induction hypothesis and Lemma~\ref{lema1}, we~get
\begin{align*}
 & \Big(V_{n-1}(x) \, R^{\top}_{n-1}(x) -Q_{n-1}(x) \, G^{\top}_{n-2}(x)\Big) \, A_{n-2} \, C_n^{-1} \phantom{olaolaolaolaol} \\
 & \phantom{olaol} = A_{n-2}^{-1} (x-B_{n-2}) \big( V_{n-2}(x) \, R^{\top}_{n-2}(x)-Q_{n-2}(x) \, G^{\top}_{n-2}(x)\big) \, A_{n-2} \, C_n^{-1} \\
 & \phantom{olaolaolaolaol}
 +A_{n-2}^{-1} C_{n-2} \big( Q_{n-3}(x) \, G^{\top}_{n-2}(x)-V_{n-3}(x) \, R^{\top}_{n-2}(x) \big) \, A_{n-2} \, C_n^{-1} \\
 & \phantom{olaol}
= A_{n-2}^{-1} \, C_{n-2} \, \big( Q_{n-3}(x) \, G^{\top}_{n-2}(x)-V_{n-3}(x) \, R^{\top}_{n-2}(x) \big) \, A_{n-2} \, C_n^{-1} \, ,
\end{align*}
and the result follows from the induction hypothesis.
\end{proof}

\section{Casorati determinants.}\label{sectio4}

Consider the matrix second-order recurrence relations
\begin{gather}
 \label{recurrencerelation}
x \, y_n=A_{n} \, y_{n+1}+B_n \, y_n+C_n \, y_{n-1} \, , \ \ n\geq 0 \, ,
 \\
\label{recurrencerelation1}
x \, t_n= t_{n+1} \, C_{n+1}+t_n \, B_n+t_{n-1} \, A_{n-1} \, , \ \ n\geq 0 \, .
\end{gather}
\begin{teo}\label{teorema2}
If $\big\{ w_n \big\}_{n\in\mathbb N} \, $ and $\big\{ v_n \big\}_{n\in\mathbb N} \, $ are solutions of~\eqref{recurrencerelation}, then
\begin{gather} \label{eq:nova}
\det \, \big( W(w_n,v_n) \big)=\det \, (A^{-1}_{n}) \, \det \, (C_n) \, \det \, \big( W(w_{n-1},v_{n-1}) \big) \, .
\end{gather}
\end{teo}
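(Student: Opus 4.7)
The plan is to express $W(w_n,v_n)$ as a product of three block matrices whose determinants are easy to compute, using the recurrence \eqref{recurrencerelation} to rewrite the bottom row in terms of $y_n$ and $y_{n-1}$. Since $A_n$ is nonsingular, we can solve the recurrence for the ``forward'' term: $y_{n+1} = A_n^{-1}\bigl[(xI_N - B_n)y_n - C_n y_{n-1}\bigr]$. Applied with $y=w$ and $y=v$, this factorises the bottom block row of $W(w_n,v_n)$ as
\begin{gather*}
\begin{pmatrix} w_n & v_n \\ w_{n+1} & v_{n+1} \end{pmatrix}
=
\begin{pmatrix} I_N & \pmb 0 \\ \pmb 0 & A_n^{-1} \end{pmatrix}
\begin{pmatrix} I_N & \pmb 0 \\ xI_N-B_n & -C_n \end{pmatrix}
\begin{pmatrix} w_n & v_n \\ w_{n-1} & v_{n-1} \end{pmatrix}.
\end{gather*}

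Taking determinants of both sides and applying the block formula of Proposition~\ref{detpro}.2 to each of the first two factors, the first contributes $\det(A_n^{-1})$ and the second contributes $\det(-C_n) = (-1)^N \det(C_n)$. The third factor differs from the Casorati matrix $W(w_{n-1},v_{n-1})$ only by swapping its two block rows of size $N$; the corresponding permutation is the product of $N$ disjoint transpositions interleaved so as to produce $N^2$ inversions, so the row swap introduces a sign $(-1)^{N^2}$.

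Combining the three contributions gives
\begin{gather*}
\det W(w_n,v_n) = (-1)^N (-1)^{N^2} \det(A_n^{-1}) \det(C_n) \det W(w_{n-1},v_{n-1}),
\end{gather*}
and since $N+N^2 = N(N+1)$ is always even, $(-1)^{N+N^2}=1$, yielding exactly \eqref{eq:nova}.

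The only delicate point is sign bookkeeping: one must be careful that the block elementary operations correspond to left multiplication (so the ordering of the factors matches the noncommutative matrix product), and that the row-swap sign $(-1)^{N^2}$ is correctly identified with $(-1)^N$ so that it cancels against the $(-1)^N$ coming from $\det(-C_n)$. Once that is handled, everything else is a direct application of Proposition~\ref{detpro} to block-triangular matrices.
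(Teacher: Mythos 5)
Your factorization is correct and the theorem follows: the product
\begin{gather*}
\begin{pmatrix} I_N & \pmb 0 \\ \pmb 0 & A_n^{-1} \end{pmatrix}
\begin{pmatrix} I_N & \pmb 0 \\ xI_N-B_n & -C_n \end{pmatrix}
\begin{pmatrix} w_n & v_n \\ w_{n-1} & v_{n-1} \end{pmatrix}
\end{gather*}
does reproduce $W(w_n,v_n)$ because the recurrence solved for the forward term places all coefficients on the left, exactly where left block-multiplication puts them; the determinants of the two triangular factors are $\det(A_n^{-1})$ and $(-1)^N\det(C_n)$, and the block-row swap in the third factor carries sign $(-1)^{N^2}$, so the total sign $(-1)^{N+N^2}=1$ and \eqref{eq:nova} follows. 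This is, however, a genuinely different argument from the paper's. The paper expands $\det W(w_n,v_n)=\det(w_n)\det\big(v_{n+1}-w_{n+1}w_n^{-1}v_n\big)$ by the Schur complement, uses the recurrence to derive the reduction $v_{n+1}-w_{n+1}w_n^{-1}v_n=-A_n^{-1}C_n\big(v_{n-1}-w_{n-1}w_n^{-1}v_n\big)$ (formula \eqref{deter}, which is reused later, e.g. in the proof of \eqref{buena}), and then matches this against a second Schur-type expansion of $\det W(w_{n-1},v_{n-1})$, the two signs $(-1)^p$ cancelling. That route implicitly requires $w_n$ (and $v_{n-1}$) to be nonsingular at the point considered, which holds off finitely many zeros and extends by polynomiality, though the paper does not say so; your transfer-matrix factorization avoids any such generic invertibility assumption, needing only $\det A_n\neq 0$, which is part of the hypotheses. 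What you lose is the intermediate identity \eqref{deter}, which the paper extracts from its proof and exploits again later. One cosmetic remark: you describe the row swap both as $N$ disjoint transpositions and as $N^2$ inversions; these give $(-1)^N$ and $(-1)^{N^2}$ respectively, which agree since $N^2\equiv N\pmod 2$, but you should pick one count rather than conflate them.
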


\begin{proof}
First of all we recall that
$W (w_n,v_n) =
\left( \begin{matrix}
w_n & v_n \\ w_{n+1} & v_{n+1}	
 \end{matrix} \right) \, $.
Thus from Schur complement
\begin{gather*}
\det \, \big( W(w_n,v_n) \big)= \det \, (w_n) \, \det \,
\big( v_{n+1}-w_{n+1} w_n^{-1}v_n \big) \, .
\end{gather*}
Since
$ \big(w_n \big)_{n\in\mathbb N} \, $
and
$ \big(v_n \big)_{n\in\mathbb N} \, $
are solutions of~\eqref{recurrencerelation}, then
\begin{gather} \label{1}
v_{n+1} = A_{n}^{-1} ( x \, v_n - B_n \, v_n - C_n \, v_{n-1} ) ,
w_{n+1} = A_{n}^{-1} ( x \, w_n - B_n \, w_n - C_n \, w_{n-1} ) \, .
\end{gather}
Thus
\begin{gather}\label{3}
- w_{n+1} w_n^{-1}v_n=-A_{n}^{-1}
\big( x \, v_n- B_n \, v_n-C_n \, w_{n-1} \, w_n^{-1} \, v_n \big) \, .
\end{gather}
If we subtract~\eqref{3} from the first equation in~\eqref{1}
\begin{gather}\label{deter}
v_{n+1}-w_{n+1}w_n^{-1}v_n
=-A_{n}^{-1} C_n \, \big( v_{n-1} - w_{n-1} \, w_n^{-1} v_n \big) \, .
\end{gather}
As a consequence,
\begin{multline}
 \label{determinante}
\det \, \big( W (w_n,v_n) \big)
 \\
=(-1)^p \det \, (w_n) \, \det \, (A_{n}^{-1}) \, \det \,(C_n) \, \det \,(v_{n-1}-w_{n-1}w_n^{-1}v_n) \, .
\end{multline}
On the other hand, if now we consider the matrix
$
W(w_{n-1},v_{n-1})
$,
using the fact~that
\begin{gather*}
\left( \begin{matrix}
w_{n-1}&v_{n-1} \\
w_n&v_n
 \end{matrix} \right)
\left( \begin{matrix}
-w_{n}^{-1} v_n & w^{-1}_n\\
I_N &0
 \end{matrix} \right)=
\left( \begin{matrix}
v_{n-1}-w_{n-1} w_n^{-1}v_n & w_{n-1} w_n^{-1}\\
0&I_N
 \end{matrix} \right) \, , 
\end{gather*}
and
\begin{gather*}
\left( \begin{matrix}
w_{n-1} & v_{n-1} \\
w_n & v_n
 \end{matrix} \right)
\left( \begin{matrix}
I_N &0\\
-v_{n-1}^{-1} w_{n-1} & v_{n-1}^{-1}
 \end{matrix} \right)=
\left( \begin{matrix}
0&I_N \\
w_{n-1}-v_{n}v_{n-1}^{-1} w_{n-1} & v_n v_{n-1}
 \end{matrix} \right) \, ,
\end{gather*}
and Proposition~\ref{detpro}, we have that
\begin{align*}
 & \det \, \big( W(w_{n-1},v_{n-1}) \big)
  =  (-1)^p \det \, (w_n) \, \det \, (v_{n-1}-w_{n-1}w_n^{-1}v_n)
 \\
  & \phantom{\det \, \big( W(w_{n-1},v_{n-1}) \big)}
  \, =  (-1)^p \det \, (v_{n-1}) \, \det \, (w_{n}-v_{n}v_{n-1}^{-1}w_{n-1}) \, .
\end{align*}
Replacing the above in~\eqref{determinante} we~get~\eqref{eq:nova}.
\end{proof}
\begin{proposition}\label{proposition5}
The sequences $\big\{ V_n \big\}_{n\in\mathbb N} \, $ and $\big\{ V^{(1)}_{n-1} \big\}_{n\in\mathbb N} \, $ with initial conditions $(V_{-1},V_0)=({\bf 0},I_N )$ and $(V^{(1)}_{-2},V^{(1)}_{-1})=(-A_0,{\bf 0})$ are linearly independent solutions of~\eqref{recurrencerelation}.
Moreover, they constitute a basis of $ \, \mathbb S \, $.
\end{proposition}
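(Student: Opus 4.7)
The plan is to carry out the proof in three short steps: verify that both families are solutions of \eqref{recurrencerelation}, verify their linear independence via the Casorati criterion of Theorem~\ref{teorema-indepen}, and then show that they actually span $\mathbb S$.

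For the first step, the sequence $\{V_n\}_{n\in\mathbb N}$ is a solution of \eqref{recurrencerelation} by the very construction of the biorthogonal family \eqref{eq.1}. For $\{V^{(1)}_{n-1}\}_{n\in\mathbb N}$, I would start from \eqref{reasociados} with $k=1$,
\begin{gather*}
x\,V^{(1)}_m(x)=A_{m+1}\,V^{(1)}_{m+1}(x)+B_{m+1}\,V^{(1)}_m(x)+C_{m+1}\,V^{(1)}_{m-1}(x),
\end{gather*}
and set $m=n-1$ to recover \eqref{recurrencerelation} for the shifted sequence. The stated initial condition $V^{(1)}_{-1}=\pmb 0$ is part of the definition of the associated polynomials, while $V^{(1)}_{-2}=-A_0$ is obtained by setting $m=-1$ in the relation above, using $C_0=I_N$, and solving for $V^{(1)}_{-2}$.

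For linear independence, by Theorem~\ref{teorema-indepen} it suffices to exhibit one $\hat n\in\mathbb N$ at which the block Casorati determinant $\det W(V_{\hat n},V^{(1)}_{\hat n-1})$ is nonzero. Taking $\hat n=0$ and using the initial data gives the block-triangular matrix
\begin{gather*}
W(V_0,V^{(1)}_{-1})=\left(\begin{matrix} I_N & \pmb 0 \\ V_1(x) & I_N \end{matrix}\right),
\end{gather*}
whose determinant is $1$. (As a sanity check, one could also iterate Theorem~\ref{teorema2}, since $\det(A_n^{-1})\det(C_n)$ is never zero, to get non-vanishing at every $n$.) Hence the two families are linearly independent solutions of \eqref{recurrencerelation}.

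Finally, to see that they form a basis of $\mathbb S$, let $(y_n)_{n\in\mathbb N}$ be any solution of \eqref{recurrencerelation} with initial data $(y_0,y_1)$. Define $\alpha:=y_0$ and $\beta:=y_1-V_1(x)\,y_0$, and set
\begin{gather*}
z_n:=V_n(x)\,\alpha+V^{(1)}_{n-1}(x)\,\beta.
\end{gather*}
Since $\mathbb S$ is a right module over $\mathbb C^{N\times N}[x]$, $(z_n)_{n\in\mathbb N}\in\mathbb S$. The choice of $\alpha,\beta$ together with the initial values $V_0=I_N$, $V^{(1)}_{-1}=\pmb 0$, $V^{(1)}_0=I_N$ gives $z_0=y_0$ and $z_1=y_1$, so by the uniqueness of solutions (Proposition~\ref{unicidad}) we have $z_n=y_n$ for all $n$. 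This shows that every solution is a right $\mathbb C^{N\times N}[x]$-combination of $V_n$ and $V^{(1)}_{n-1}$, and combined with the linear independence already proved, concludes that they form a basis of $\mathbb S$.

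The whole argument is essentially bookkeeping; the only mildly delicate point is being careful with the index shift in the associated-polynomial recurrence so that the initial conditions $V^{(1)}_{-2}=-A_0$, $V^{(1)}_{-1}=\pmb 0$ are the correct ones to match \eqref{recurrencerelation} in the unshifted index $n$.
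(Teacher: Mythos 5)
Your proof is correct, but it takes a genuinely different route from the paper's in both halves. For linear independence the paper iterates Theorem~\ref{teorema2} down to the initial level, obtaining $\det W(V_n,V^{(1)}_{n-1})=\det(A_0)\prod_{j=1}^{n}\det(A_j^{-1})\det(C_j)\neq 0$ for every $n$, whereas you invoke Theorem~\ref{teorema-indepen} at the single index $\hat n=0$, where the Casorati matrix $\left(\begin{smallmatrix} I_N & \pmb 0\\ V_1(x) & I_N\end{smallmatrix}\right)$ is block triangular with determinant $1$; your version is leaner, while the paper's computation has the side benefit of recording the Casoratian at every level (a formula it reuses later for $W(Q_n,V^{(k)}_{n-k})$). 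For the basis property, the paper identifies $V_n=y_n(e_1)$ and $V^{(1)}_{n-1}=-y_n(e_0)\,A_0$ and transfers the basis property of $\{y_n(e_0),y_n(e_1)\}$ (Corollary~\ref{coro1}) through the invertible constant block matrix $\left(\begin{smallmatrix} -A_0 & 0\\ 0 & I_N\end{smallmatrix}\right)$, whereas you take an arbitrary solution, solve explicitly for the right coefficients $\alpha=y_0$, $\beta=y_1-V_1(x)\,y_0$, and conclude by uniqueness (Proposition~\ref{unicidad}); both arguments are valid, yours being more hands-on, the paper's making the change-of-basis structure explicit. You also spell out two points the paper leaves implicit: that the shifted sequence $\{V^{(1)}_{n-1}\}$ satisfies \eqref{recurrencerelation} (index shift in \eqref{reasociados} with $k=1$) and that $V^{(1)}_{-2}=-A_0$ is forced by the $n=0$ relation together with $C_0=I_N$. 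The only point deserving one extra sentence is that \eqref{recurrencerelation} at $n=0$ also involves $y_{-1}$, so a solution carries the datum $y_{-1}$ as well; since $C_0=I_N$ is invertible, $y_{-1}$ is determined by $(y_0,y_1)$, hence matching $z_0=y_0$ and $z_1=y_1$ does pin down the whole solution --- a harmless omission, not a gap.
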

\begin{proof}
Since $\big\{ V_n \big\}_{n\in\mathbb N} \, $ and $ \big\{V^{(1)}_{n-1} \big\}_{n\in\mathbb N} \, $ are solutions of~\eqref{recurrencerelation}, then using Theorem~\ref{teorema2},
we successively~get
\begin{align*}
 & \det \, \big( W(V_n,V^{(1)}_{n-1}) \big)
 = \det \, (A^{-1}_{n}) \, \det \, (C_n) \, \det \, \big( W(V_{n-1}(x),V^{(1)}_{n-2}(x))\big)
 \phantom{olaola} \\
 & \phantom{olaola} =  \prod_{j=1}^{n} \det \, (A^{-1}_{j}) \, \det \, (C_j) \, \det \, \big( W(V_{-1}(x),V^{(1)}_{-2}(x)) \big) \\
 & \phantom{olaola} =  \det \, (A_0) \, \prod_{j=1}^{n} \det \, (A^{-1}_{j}) \, \det \, (C_j) \, .
\end{align*}
Now, as for every $n\in \mathbb N \, $, $\det \, (A_{n}) \ne 0$ and $\det \, (C_{n})\ne 0$, we get that $\big\{ V_n \big\}_{n\in\mathbb N} \, $ and $ \big\{V^{(1)}_{n-1} \big\}_{n\in\mathbb N} \, $ are linearly independent.

Recall that if $ \big(y_{n}(e_i) \big)_{n \in \mathbb N} \, , \, i=0,1\, $ are the solutions of~\eqref{recurrencerelation} with initial conditions $(I_N ,0)$ and $(0,I_N )$, respectively, then
\begin{gather*}
V_n(x)=y_{n}(e_1) \, , \ \ V_{n-1}^{(1)}(x) =-y_n (e_0) \, A_0 \, ,
\end{gather*}
and so
\begin{gather*}
\left( \begin{matrix} V_{n-1}^{(1)}(x) & V_{n}(x) \end{matrix} \right)
=
\left( \begin{matrix} y_{n}(e_0) & y_{n}(e_1) \end{matrix} \right)
\left( \begin{matrix}
-A_0 & 0 \\ 0 & I_N
\end{matrix}
\right) \, ;
\end{gather*}
which implies that $\big\{ V_n \big\}_{n\in\mathbb N} \, $ and $ \big\{V^{(1)}_{n-1} \big\}_{n\in\mathbb N} \, $ constitute a basis for $ \, \mathbb S \, $.
\end{proof}

From Proposition~\ref{proposition5} it follows that every solution of~\eqref{recurrencerelation} is a linear combination
of $ \big\{ V_n \big\}_{n\in\mathbb N} \, $ and $ \big\{ V^{(1)}_{n-1} \big\}_{n\in\mathbb N} \, $. In particular
\begin{gather}\label{cone}
V^{(k)}_{n-k}(x)
=V_n(x) \, \gamma_k
+V^{(1)}_{n-1}(x) \, \eta_k
\, .
\end{gather}
Taking $n=k$ and $n=k-1$, we get the representation
\begin{align*}
\gamma_k = \Theta_*
\left(\begin{matrix}
V^{(1)}_{k-2}(x)&V_{k-1}(x) \\[1.25pt]
V^{(1)}_{k-1}(x)&V_{k}(x)
\end{matrix}\right)^{-1} \, \ \mbox{ and } \ \
\eta_k = &\Theta_*\left(\begin{matrix}
V_{k-1}(x)&V^{(1)}_{k-2}(x) \\[1.25pt]
V_{k}(x)&V^{(1)}_{k-1}(x)
\end{matrix}\right)^{-1} \, .
\end{align*}
In particular, if we take $k=2$, from~\eqref{deter} we obtain
\begin{gather*}
x \, V_{n-1}^{(1)}(x) = V_{n}(x) \, A_{0} + V_{n-1}^{(1)}(x) \, A_{0}^{-1} B_{0}\, A_{0} + V^{(1)}_{n-2}(x) \, A_{1}^{-1} C_1 \, A_{0} \, .
\end{gather*}
\begin{proposition}
Let $\big\{ V_n \big\}_{n\in\mathbb N} \, $ and $\big\{ V^{(k)}_{n-1} \big\}_{n\in\mathbb N} \, $
satisfies~\eqref{recurrencerelation}
with initial conditions $(V_{-1},V_0)=({\bf 0},I_N )$ and $(V^{(k)}_{-2},V^{(k)}_{-1})=(-C_{k-1}^{-1} A_{k-1},{\bf 0})$.
Then, 
\begin{multline}\label{buena}
x \, V_{n-1}^{(k)}(x)
 \\
=V_{n}^{(k-1)}(x) \, A_{k-1} + V_{n-1}^{(k)}(x) \, A_{k-1}^{-1} B_{k-1} \, A_{k-1} + V^{(k+1)}_{n-2}(x) \, A_{k}^{-1} C_k \, A_{k-1} \, .
\end{multline}
\end{proposition}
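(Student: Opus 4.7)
My plan is to recognize both sides of~\eqref{buena}, viewed as sequences indexed by $n$, as solutions of one and the same second-order right-linear matrix recurrence, and then to identify them by matching two consecutive initial values, invoking the uniqueness result Proposition~\ref{unicidad}.

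First I would identify the common recurrence: both sides should solve
$$x\,y_n = A_{n+k-1}\,y_{n+1} + B_{n+k-1}\,y_n + C_{n+k-1}\,y_{n-1}.$$
For the left-hand side $Z_n := xV^{(k)}_{n-1}(x)$, this follows by multiplying the three-term recurrence for $\{V^{(k)}_m\}$ at $m = n-1$ on the left by the scalar $x$. For the right-hand side, I would check that each of $\{V^{(k-1)}_n\}$, $\{V^{(k)}_{n-1}\}$ and $\{V^{(k+1)}_{n-2}\}$ satisfies the same shifted recurrence: the first trivially, the second by setting $m=n-1$ in the $V^{(k)}$-recurrence (whose coefficient index $m+k$ becomes $n+k-1$), and the third by setting $m=n-2$ in the $V^{(k+1)}$-recurrence (whose coefficient index $m+k+1$ also becomes $n+k-1$). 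Since the operator $\operatorname L$ is right-linear over $\mathbb C^{N\times N}[x]$, right multiplication of these three basic solutions by the constant matrices $A_{k-1}$, $A_{k-1}^{-1}B_{k-1}A_{k-1}$, $A_k^{-1}C_kA_{k-1}$ and subsequent summation keep the right-hand side in the same solution space.

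Next I would verify equality at the two consecutive indices $n=0$ and $n=1$. At $n=0$: the LHS equals $xV^{(k)}_{-1}=\mathbf 0$ by the prescribed initial condition, while the RHS reads
$$I_N\,A_{k-1} + \mathbf 0\cdot A_{k-1}^{-1}B_{k-1}A_{k-1} + (-C_k^{-1}A_k)\,A_k^{-1}C_kA_{k-1} = A_{k-1}-A_{k-1} = \mathbf 0,$$
using the analogous extended initial value $V^{(k+1)}_{-2}=-C_k^{-1}A_k$ (the same convention applied with $k$ replaced by $k+1$). At $n=1$: $Z_1=xI_N$, and substituting $V^{(k-1)}_1=A_{k-1}^{-1}(xI_N-B_{k-1})$, $V^{(k)}_0=I_N$ and $V^{(k+1)}_{-1}=\mathbf 0$, the RHS collapses to $A_{k-1}^{-1}(xI_N-B_{k-1})A_{k-1} + A_{k-1}^{-1}B_{k-1}A_{k-1} = xI_N$, as required.

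Since both sides lie in the same solution space and coincide at two consecutive indices, Proposition~\ref{unicidad} forces them to coincide for every $n\in\mathbb N$, yielding~\eqref{buena}. The only delicate point is the index bookkeeping that aligns the three distinct associated-polynomial families to the same shifted recurrence, together with noting that the extended initial condition $V^{(k+1)}_{-2}=-C_k^{-1}A_k$ is exactly what makes the $n=0$ verification collapse to $\mathbf 0$; beyond that the argument reduces to routine computation.
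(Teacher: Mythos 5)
Your proof is correct, but it follows a genuinely different route from the paper. The paper proves~\eqref{buena} by expanding $V^{(k)}_{n-k}$ in the basis $\{V_n,V^{(1)}_{n-1}\}$ of Proposition~\ref{proposition5} via the quasideterminant coefficients $\gamma_k,\eta_k$ of~\eqref{cone}, and then combining the identity~\eqref{deter} with the auxiliary formulas~\eqref{123}--\eqref{125}; that computation implicitly manipulates inverses such as $(V_k(x))^{-1}$ and $(V^{(1)}_{k-1}(x))^{-1}$, so it is really an identity at generic $x$ extended by polynomiality, and it reuses machinery the paper needs elsewhere. You instead observe that both sides of~\eqref{buena}, as sequences in $n$, solve the shifted recurrence $x\,y_n=A_{n+k-1}y_{n+1}+B_{n+k-1}y_n+C_{n+k-1}y_{n-1}$ (the left-hand side because $xI_N$ is central and the solution set is a right module, the right-hand side by right-linearity of $\operatorname L$ after the index bookkeeping you carry out), and then match two consecutive values and invoke uniqueness as in Proposition~\ref{unicidad} (after normalizing by $A_{n+k-1}^{-1}$, which is legitimate since the $A_n$ are nonsingular). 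Your index check is consistent with the statement's convention, since $V^{(k)}_{-2}=-C_{k-1}^{-1}A_{k-1}$ is exactly the value forced by the shifted recurrence at $n=0$, and your extension $V^{(k+1)}_{-2}:=-C_k^{-1}A_k$ is the same convention one step up; if one prefers to avoid it, verifying at $n=1$ and $n=2$ works equally well (at $n=2$ the right-hand side collapses to $xA_k^{-1}(xI_N-B_k)=xV^{(k)}_1$). What your argument buys is a shorter, purely algebraic proof valid directly as a polynomial identity, with no generic invertibility assumptions; what the paper's argument buys is that the connection coefficients $\gamma_k,\eta_k$ and the quasideterminant formulas are exhibited explicitly along the way, which the later sections exploit.
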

\begin{proof}
From~\eqref{cone} and taking into account~\eqref{deter} we get	
\begin{multline}
V_{n-2}^{(k+1)}(x) \, A^{-1}_{k} \, C_{k} =
-V^{(1)}_{n+k-2}(x)
\, \big( V^{(1)}_{k-2}(x) - V_{k-1}(x) \, (V_{k}(x))^{-1} V_{k-1}^{(1)} \big)^{-1}
\\ -V_{n+k-1}(x) \,
\big(V_{k-1}(x)-V_{k-2}^{(1)}(x) \, (V_{k-1}^{(1)}(x))^{-1} V_k(x) \big)^{-1}
 \, .
\label{125}
\end{multline}
On the other hand, observe that
\begin{align*}
A_{k-1}^{-1}(xI_N -B_{k-1})&= \big(
V_{k-1}^{(1)}(x) + A_{k-1}^{-1} C_{k-1} \, V_{k-3}^{(1)}(x)
\big)
  \,
(V_{k-2}^{(1)}(x))^{-1} \, , \\
A_{k-1}^{-1} (xI_N -B_{k-1}) &=\big(
V_{k}(x)+A_{k-1}^{-1} C_{k-1} \, V_{k-2}(x)
\big)(V_{k-1}(x))^{-1} \, .
\end{align*}
Thus, from the recurrence relation~\eqref{recurrencerelation} and~\eqref{deter}
\begin{multline*}
\big(
V_k(x)-V^{(1)}_{k-1}(x) \, (V_{k-2}^{(1)}(x))^{-1} V_{k-1}(x)
\big)^{-1}
A_{k-1}^{-1} (xI_N -B_{k-1})
 \\
=\big(
V_{k-2}^{(1)}(x) \, (V^{(1)}_{k-3})^{-1} C_{k-1}^{-1} A_{k-1} \, V_{k}(x)
+
\big(
V^{(1)}_{k-2}(x) \, (V^{(1)}_{k-1}(x))^{-1} V_k(x) - V_{k-1}(x)
\big)^{-1}
 \\
-V^{(1)}_{k-2}(x) \, (V_{k-3}^{(1)}(x))^{-1}C_{k-1}^{-1} A_{k-1} \, V^{(1)}_{k-1}(x) \, (V^{(1)}_{k-2}(x))^{-1} V_{k-1}(x)
\big)^{-1} \, ,
\end{multline*}
and so
\begin{multline}
\big(
V_k(x)-V^{(1)}_{k-1}(x)
\, (V_{k-2}^{(1)}(x))^{-1}V_{k-1}(x)
\big)^{-1}
A_{k-1}^{-1} (xI_N -B_{k-1})
\\
=-\big(V_{k-1}(x)-V^{(1)}_{k-2}(x) \, (V^{(1)}_{k-1}(x))^{-1} V_k(x)\big)^{-1}
\\
+ \big(
V_{k-1}(x)-V_{k-2}^{(1)}(x) \, (V_{k-3}^{(1)}(x))^{-1} V_{k-2}(x)
\big)^{-1} \, .
\label{123}
\end{multline}
In the same way, we obtain
\begin{multline}
\big(
V_{k-1}^{(1)}(x)-V_k(x) \, (V_{k-1}(x))^{-1}V^{(1)}_{k-2}(x)
\big)^{-1} A_{k-1}^{-1} (xI_N -B_{k-1})
 \\
=- \big(
V^{(1)}_{k-2}(x)-V_{k-1}(x) \, (V_{k}(x))^{-1}V^{(1)}_{k-1}(x)
    \big)^{-1}
\\
 +
\big(V^{(1)}_{k-2}(x)-V_{k-1}(x) \, (V_{k-2}(x))^{-1}V^{(1)}_{k-3}(x)
\big)^{-1} \, .
\label{124}
\end{multline}
Replacing~\eqref{123} and~\eqref{124} in~\eqref{125}
\begin{multline*}
V_{n-2}^{(k+1)}(x) \, A_{k}^{-1}C_{k}=
\big(V_{n+k-1}(x) \, \gamma_k
+V^{(1)}_{n+k-2}(x) \, \eta_k
\big)
\, A_{k-1}^{-1} (xI_N -B_{k-1}) \\
-\big(
V_{n+k-1}(x) \, \gamma_{k-1}
 + V^{(1)}_{n+k-2}(x) \, \eta_{k-1}
\big)
\end{multline*}
and so
\begin{gather*}
V_{n-2}^{(k+1)}(x) \, A_{k}^{-1} C_{k}
= V_{n-1}^{(k)}(x) \, A^{-1}_{k-1} (xI_N -B_{k-1})-V^{(k-1)}_{n}(x) \, ,
\end{gather*}
and the result follows.
\end{proof}

In a similar way, we~get the following result.
\begin{proposition}
The sequences of matrix polynomials, $ \big\{ G_n \big\}_{n\in\mathbb N} \, $
 and
$ \big\{ G^{(1)}_{n-1} \big\}_{n\in\mathbb N} \, $ are linearity independent solutions of~\eqref{recurrencerelation1}.
 Moreover, for every $k\in\mathbb N \, $,
\begin{gather*}
G^{(k)\top}_{n-k}(x)=
\widetilde{\gamma}_k \, G^\top_n(x) +
\widetilde{\eta}_k \, G^{(1)\top}_{n-1}(x) \, ,
\end{gather*}
where
\begin{gather*}
\widetilde{\gamma}_k =\Theta_*
\left(\begin{matrix}
G^{(1)\top}_{k-2}(x)&G^{(1)\top}_{k-1}(x) \\[1.25pt]
G^\top_{k-1}(x)&G^\top_{k}(x)
\end{matrix}\right)^{-1} \, \
\mbox{ and } \
 \
\widetilde{\eta}_k = \Theta_*
\left(\begin{matrix}
G^\top_{k-1}(x)&G^\top_{k}(x) \\[1.25pt]
G^{(1)\top}_{k-2}(x)&G^{(1)\top}_{k-1}(x)
\end{matrix}\right)^{-1} \, ;
\end{gather*}
moreover, the following relation holds
\begin{gather*}
x \, G^{(k)\top}_{n-1}(x)=C_kG^{(k-1)\top}_{n}(x)+C_kB_{k-1}C_k^{-1}G^{(k)\top}_{n-1}(x)+C_{k} \,A_{k-1}C^{-1}_{k+1}G_{n-2}^{(k+1)\top}(x) \, .
\end{gather*}
\end{proposition}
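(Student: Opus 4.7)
The proof follows the three-step blueprint used for the $V$-family in Proposition~\ref{proposition5} and the proposition that establishes~\eqref{buena}, adapted to the right-multiplication recurrence~\eqref{recurrencerelation1}. First I would establish a right-sided analog of Theorem~\ref{teorema2}: for any two solutions $\{s_n^{\top}\}$, $\{t_n^{\top}\}$ of~\eqref{recurrencerelation1}, a Schur complement computation parallel to the one in the proof of Theorem~\ref{teorema2}, but with $C_{n+1}$ and $A_{n-1}$ now acting on the right, yields a propagation identity of the form $\det\,W(s_n^{\top},t_n^{\top}) = \det(C_n)\,\det(A_n^{-1})\,\det\,W(s_{n-1}^{\top},t_{n-1}^{\top})$. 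Iterating from the initial data of $\{G_n\}$ and $\{G^{(1)}_{n-1}\}$ and using the fact that the $A_n$'s and $C_n$'s are nonsingular produces a nonvanishing Casorati determinant, so Theorem~\ref{teorema-indepen} gives linear independence. Because the solution set of~\eqref{recurrencerelation1} is a free rank-two \emph{left} module over $\mathbb C^{N\times N}[x]$ (the left-module counterpart of Corollary~\ref{coro1}), linear independence upgrades $\{G_n\},\{G^{(1)}_{n-1}\}$ to a basis.

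Next I would extract the quasideterminant formulas for $\widetilde{\gamma}_k$ and $\widetilde{\eta}_k$. Since $\big\{G^{(k)}_{n-k}\big\}_{n\in\mathbb N}$ also solves~\eqref{recurrencerelation1}, the basis property furnishes $\widetilde{\gamma}_k,\widetilde{\eta}_k$ with
\begin{equation*}
G^{(k)\top}_{n-k}(x) = \widetilde{\gamma}_k\,G^\top_n(x) + \widetilde{\eta}_k\,G^{(1)\top}_{n-1}(x)\,.
\end{equation*}
Specializing to $n=k$ and $n=k-1$, and invoking the initial conditions $G^{(k)}_0=I_N$, $G^{(k)}_{-1}=\pmb 0$, gives the $2\times 2$ block system
\begin{align*}
I_N &= \widetilde{\gamma}_k\,G^\top_k(x) + \widetilde{\eta}_k\,G^{(1)\top}_{k-1}(x)\,, \\
\pmb 0 &= \widetilde{\gamma}_k\,G^\top_{k-1}(x) + \widetilde{\eta}_k\,G^{(1)\top}_{k-2}(x)\,.
\end{align*}
Eliminating $\widetilde{\eta}_k$ from the second line and substituting into the first yields $\widetilde{\gamma}_k=\big(G^\top_k-G^\top_{k-1}(G^{(1)\top}_{k-2})^{-1}G^{(1)\top}_{k-1}\big)^{-1}$, which is exactly the stated last quasideterminant; the dual elimination produces the formula for $\widetilde{\eta}_k$. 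Invertibility of the Schur complements involved is guaranteed by the linear independence established in the first step.

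For the three-term identity at the end, I would mirror the derivation of~\eqref{buena}. Substituting the representation into~\eqref{recurrencerelation1} and employing right-sided analogs of~\eqref{123} and~\eqref{124}, in which $A_{k-1}^{-1}(xI_N-B_{k-1})$ is systematically replaced by $(xI_N-B_{k-1})\,C_k^{-1}$, one recovers the same telescoping pattern on the right-hand side. Regrouping the coefficients of $G^\top_{n+k}$ and $G^{(1)\top}_{n+k-1}$ through $\widetilde{\gamma}_k,\widetilde{\eta}_k,\widetilde{\gamma}_{k-1},\widetilde{\eta}_{k-1}$ then yields the displayed identity between $G^{(k-1)\top}_n$, $G^{(k)\top}_{n-1}$ and $G^{(k+1)\top}_{n-2}$.

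The main obstacle is precisely this dualization. Because the coefficients in~\eqref{recurrencerelation1} act on the right of $G^\top_n$, every manipulation with the last quasideterminant $\Theta_*$ must be reversed in order of the factors, and the key identity analogous to~\eqref{125} requires careful bookkeeping of which Schur complement sits on which side of the inverse. Once this reordering is propagated consistently through the analogs of~\eqref{123}--\eqref{124}, the rest of the argument is a faithful mirror image of the $V$-case proof.
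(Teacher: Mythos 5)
Your proposal is correct and is essentially the paper's own argument: the paper gives no separate proof of this proposition, stating only that it follows ``in a similar way'' from the $V$-case (Proposition~\ref{proposition5} and the derivation of~\eqref{buena}), and that is exactly the dualization you carry out — the right-sided Casorati/linear-independence step, the left-module basis property, the extraction of $\widetilde{\gamma}_k,\widetilde{\eta}_k$ from the $n=k$ and $n=k-1$ specializations (your Schur-complement computation matches the stated quasideterminants), and the mirrored telescoping for the three-term identity. One cosmetic slip worth fixing: in your right-sided analogue of Theorem~\ref{teorema2} the roles of the coefficients are interchanged — for~\eqref{recurrencerelation1} it is the forward coefficient $C_{n+1}$ that gets inverted and the backward coefficient $A_{n-1}$ that enters directly, so the factor should be $\det(A_{n-1})\,\det(C_{n+1}^{-1})$ rather than $\det(C_n)\,\det(A_n^{-1})$ — but since only the nonvanishing of this factor is used, the argument is unaffected.
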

Since $ \big\{ Q_n \big\}_{n\in\mathbb N} \, $ and $\big\{ R_n \big\}_{n\in\mathbb N} \, $ are also solutions of~\eqref{recurrencerelation} with initial conditions
$
Q_0(y) = \prodint{I_N ,\frac{I_N }{y-x} }_{u} $,
$
Q_{-1}(y)=C_0^{-1} $,
$
R^{\top}_0(y)=\prodint{\frac{I_N }{y-x},I_N}_{u} \, $, and
$
R^{\top}_{-1}(y)=A^{-1}_{-1} \, $;
then for
$x\in\mathbb C \setminus \operatorname{supp} \, (u)$ it is clear that
\begin{gather}\label{aso}
Q_{n}(x)=V_n(x) \, Q_0(x)-V^{(1)}_{n-1}(x) \, A_0^{-1} \, , \\R_n^\top(x)=R^\top_0(x) \, G_n^\top(x) - C_1^{-1} G^{(1)\top}_{n-1}(x) \, .
\label{aso1}
\end{gather}
From here we~get
\begin{gather*}
V^{(1)}_{n-1}(x)=\prodint{\frac{V_n(x)-V_n(y)}{x-y},I_N }_u A_0, \ \ G^{(1)\top}_{n-1}(x)=C_1\prodint{ I_N ,\frac{G_n(x)-G_n(y)}{x-y}}_u \, .
\end{gather*}
Using a similar argument as in Proposition~\ref{proposition5},
\begin{gather*}
\det \, (W(Q_n,V^{(k)}_{n-k}))=\prod^n_{j=k} \det \, (A^{-1}_{j}) \, \det \, (C_j) \, \det \, (Q_{k-1}(x)) \, ,
\end{gather*}
\begin{proposition}\label{prop10}
The sequences $\big\{ Q_n \big\}_{n\in\mathbb N} \, $ and $ \big\{V^{(k)}_{n-k} \big\}_{n\in\mathbb N} \, $
are linearly independent solutions of~\eqref{recurrencerelation} in $x \in \mathbb C \setminus \operatorname{supp} \, (u)$.
Moreover,
\begin{gather*}
V^{(k)}_{n-k}(x)=V_n(x) \, 
{\alpha}_k
- Q_{n}(x) \, 
{\beta}_k \, ,
\end{gather*}
with
\begin{gather*}
 \alpha_k=\Theta_*\left(\begin{matrix}
Q_{k-1}(x)&V_{k-1}(x) \\[1.25pt]
Q_{k}(x)&V_{k}(x)
\end{matrix}\right)^{-1} \, \ \mbox{ and } \ \ \beta_k=\Theta_*\left(\begin{matrix}
V_{k-1}(x)&Q_{k-1}(x) \\[1.25pt]
V_{k}(x)&Q_{k}(x)
\end{matrix}\right)^{-1} \, .
\end{gather*}
In the same way, $ \big\{ R^\top_n \big\}_{n\in\mathbb N} \, $
and
$\big\{ G^{(k)\top}_{n-k} \big\}_{n\in\mathbb N} \, $ also are linearly independent solutions of~\eqref{recurrencerelation1} in $x\in\mathbb C \setminus\operatorname{supp} \, (u)$; moreover,
\begin{gather*}
(G^{(k)}_{n-k}(x))^\top=\widetilde{\alpha}_k \, G^{\top}_n(x)
- \widetilde{\beta}_k \, R^\top_{n}(x) \, ,
\end{gather*}
with
\begin{gather*}
\widetilde\alpha_k=
\Theta_*\left(\begin{matrix}
R^\top _{k-1}(x)&R^\top _{k}(x) \\[1.25pt]
G^\top _{k-1}(x)&G^\top _{k}(x)
\end{matrix}\right)^{-1} \, \ \mbox{ and } \ \
 \widetilde\beta_k=\Theta_*\left(\begin{matrix}
G^\top _{k-1}(x)&G^\top _{k}(x) \\[1.25pt]
R^\top _{k-1}(x)&R^\top _{k}(x)
\end{matrix}\right)^{-1} \, .
\end{gather*}
\end{proposition}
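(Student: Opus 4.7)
The plan is to mirror the argument used for Proposition \ref{proposition5} and equation \eqref{cone}, replacing the pair $(V_n,V^{(1)}_{n-1})$ by $(Q_n,V^{(k)}_{n-k})$, and then to solve a $2\times 2$ block linear system for the connection coefficients.

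First, I would prove linear independence of $\{Q_n\}$ and $\{V^{(k)}_{n-k}\}$ on $\mathbb{C}\setminus\operatorname{supp}(u)$ by invoking the Casorati identity stated just before the proposition,
\begin{gather*}
\det W(Q_n,V^{(k)}_{n-k}) = \prod_{j=k}^n \det(A_j^{-1})\,\det(C_j)\,\det(Q_{k-1}(x)),
\end{gather*}
which follows from Theorem \ref{teorema2} applied iteratively to $(Q_n,V^{(k)}_{n-k})$, using the initial conditions $(V^{(k)}_{-1},V^{(k)}_0)=(\pmb 0, I_N)$. Since every $A_j$ and $C_j$ is nonsingular and since $\det Q_{k-1}(x)$ does not vanish identically outside $\operatorname{supp}(u)$, Theorem \ref{teorema-indepen} gives linear independence. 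Combined with Corollary \ref{coro1} and the fact that the space of solutions $\mathbb S$ is a free right module of rank two (as follows from Proposition \ref{proposition5}), this shows that $\{Q_n\}$ and $\{V^{(k)}_{n-k}\}$ constitute a basis of $\mathbb S$ over $\mathbb{C}^{N\times N}[x]$ on this domain.

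Next, since $V^{(k)}_{n-k}$ is itself a solution of \eqref{recurrencerelation}, the basis property produces coefficients $\alpha_k,\beta_k\in\mathbb{C}^{N\times N}$ (depending on $k$ and $x$ but not on $n$) with
\begin{gather*}
V^{(k)}_{n-k}(x) = V_n(x)\,\alpha_k - Q_n(x)\,\beta_k,\qquad n\in\mathbb N.
\end{gather*}
Evaluating at $n=k-1$ and $n=k$ and substituting the initial conditions $V^{(k)}_{-1}(x)=\pmb 0$, $V^{(k)}_0(x)=I_N$ yields the block linear system
\begin{gather*}
\begin{pmatrix} V_{k-1}(x) & -Q_{k-1}(x)\\ V_k(x) & -Q_k(x)\end{pmatrix}\begin{pmatrix}\alpha_k\\ \beta_k\end{pmatrix}=\begin{pmatrix}\pmb 0\\ I_N\end{pmatrix}.
\end{gather*}
Reading off $\alpha_k$ and $\beta_k$ by the block-inverse formula of Proposition \ref{detpro}(3) (whose lower-right block is precisely a last quasideterminant) produces the stated expressions. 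The analogous statement for $\{R_n^\top\}$ and $\{G^{(k)\top}_{n-k}\}$ is obtained by the dual argument applied to the recurrence \eqref{recurrencerelation1}, with the roles of $V_n$, $Q_n$ taken by $G_n^\top$, $R_n^\top$ and the corresponding initial conditions $R_{-1}^\top = A_{-1}^{-1}$, $R_0^\top = \prodint{(y-x)^{-1}I_N, I_N}_u$.

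The main obstacle I anticipate is ensuring that the quasideterminants are well defined throughout $\mathbb{C}\setminus\operatorname{supp}(u)$: one must verify that the top-left blocks $Q_{k-1}(x)$ and $V_{k-1}(x)$ are invertible on a dense set so that the Schur-complement formulas make sense, and then extend the resulting identities by analytic continuation. A careful sign check is also needed, since the minus sign placed in front of $Q_n(x)\beta_k$ in the statement must be tracked consistently through the block inversion to produce the two quasideterminants in the correct orientation.
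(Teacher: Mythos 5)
Your proposal follows essentially the same route the paper takes (implicitly): the Casorati identity displayed just before the proposition gives the linear independence via Theorem~\ref{teorema-indepen}, and the connection coefficients are obtained by evaluating the expansion at $n=k-1$ and $n=k$ and solving the resulting $2\times2$ block system with Proposition~\ref{detpro}, exactly as was done for~\eqref{cone}. The sign check you deferred is worth carrying out: solving your system literally yields $\alpha_k$ as stated but $\beta_k=-\Theta_*\left(\begin{smallmatrix} V_{k-1}&Q_{k-1}\\ V_k&Q_k\end{smallmatrix}\right)^{-1}$ (and similarly for $\widetilde\beta_k$), i.e.\ with the minus sign in front of $Q_n\,\beta_k$ the quasideterminant expression for $\beta_k$ picks up an extra minus sign, as one can confirm against Lemma~\ref{lema1} and the Liouville--Ostrogradski formulas~\eqref{os1}--\eqref{os2}, so the discrepancy lies in the stated formula rather than in your method.
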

\begin{proposition}\label{cris1}
The following Christoffel-Darboux formulas hold
\begin{gather*} 	
\sum_{k=1}^{n}V^{(k)}_{n-k}(y) \, A_{k-1}^{-1} \, V_{k-1}(x)=\frac{V_n(x)-V_n(y)}{x-y} \, , \\
\sum_{k=1}^{n}G^\top_{k-1}(x) \, C_k^{-1} \, G^{(k)\top}_{n-k}(y)
 =\frac{G^\top_n(x)-G^\top_n(y)}{x-y} \, ,
\end{gather*}
as well as its confluent expression
\begin{gather*} 	
\sum_{k=1}^{n}V^{(k)}_{n-k}(x) \, A_{k-1}^{-1} V_{k-1}(x)=V^\prime_n(x) \, ,
 \\ 	
\sum_{k=1}^{n}G^\top_{k-1}(x) \, C_k^{-1} G^{(k)\top}_{n-k}(x)=(G^\top_n(x))^\prime \, .
\end{gather*}
\end{proposition}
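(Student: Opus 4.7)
The natural approach is a telescoping cancellation. Set
$S_n(x,y):=\sum_{k=1}^n V^{(k)}_{n-k}(y)\,A^{-1}_{k-1}\,V_{k-1}(x)$
and compute $(x-y)\,S_n(x,y)$ by substituting, on the right, the recurrence~\eqref{recurrencerelation} for $x\,V_{k-1}(x)$, and, on the left, the diagonal recurrence~\eqref{buena} for $y\,V^{(k)}_{n-k}(y)$. The decisive feature of~\eqref{buena} is that its right-hand side contains precisely the diagonal neighbours $V^{(k-1)}_{n-k+1}(y)$ and $V^{(k+1)}_{n-k-1}(y)$, which lie on the same diagonal $n-k=\text{const}$ as $V^{(k)}_{n-k}(y)$; this is exactly what is needed to make the sum telescope.

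Three cancellations are expected. First, the two $B_{k-1}$--contributions match term by term, both producing $V^{(k)}_{n-k}(y)\,A^{-1}_{k-1}\,B_{k-1}\,V_{k-1}(x)$. Second, the $A_{k-1}\,V_k(x)$ part of~\eqref{recurrencerelation} and the $V^{(k-1)}_{n-k+1}(y)\,A_{k-1}$ part of~\eqref{buena}, after the inverse $A^{-1}_{k-1}$ is absorbed, yield two sums $\sum_k V^{(k)}_{n-k}(y)V_k(x)$ and $\sum_k V^{(k-1)}_{n-k+1}(y)V_{k-1}(x)$; the reindexing $j=k-1$ aligns them on the diagonal $n-k=\text{const}$, and using the initial data $V^{(n)}_0(y)=V_0(x)=I_N$ and $V^{(0)}_n=V_n$ the only surviving boundary contribution is exactly $V_n(x)-V_n(y)$. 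Third, the $C_{k-1}\,V_{k-2}(x)$ part and the $V^{(k+1)}_{n-k-1}(y)\,A^{-1}_k\,C_k\,A_{k-1}$ part, after a similar reindexing, differ only by the boundary terms $V^{(1)}_{n-1}(y)\,A^{-1}_0\,C_0\,V_{-1}(x)$ and $V^{(n+1)}_{-1}(y)\,A^{-1}_n\,C_n\,V_{n-1}(x)$, both of which vanish by the initial conditions $V_{-1}=\pmb 0$ and $V^{(k)}_{-1}=\pmb 0$.

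Putting these together gives $(x-y)\,S_n(x,y)=V_n(x)-V_n(y)$, which is the first identity. The confluent version is then immediate: letting $y\to x$, the quotient $(V_n(x)-V_n(y))/(x-y)$ tends to $V'_n(x)$ by definition, while the left-hand side passes continuously to $\sum_{k=1}^n V^{(k)}_{n-k}(x)\,A^{-1}_{k-1}\,V_{k-1}(x)$. The two $G$--identities are obtained by the symmetric argument, now multiplying the corresponding sum by $(x-y)$ and inserting on the right the right-multiplication recurrence~\eqref{recurrencerelation1} for $x\,G^{\top}_{k-1}(x)$, together with the diagonal recurrence for $G^{(k)\top}_{n-k}(y)$ stated in the previous proposition.

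The main obstacle is purely one of bookkeeping: one must keep the three diagonals $n-k=\text{const}$, $n-k=\text{const}\pm 1$ straight throughout the reindexings, and carefully use the initial conditions $V^{(k)}_{-1}=\pmb 0$, $G^{(k)}_{-1}=\pmb 0$ to kill the unwanted boundary contributions on both ends. Once the indexing is set up properly, every non-trivial term cancels by design.
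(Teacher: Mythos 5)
Your plan is correct and is essentially the paper's own proof: the paper likewise multiplies the recurrence \eqref{buena} (written for $y\,V^{(k)}_{n-k}(y)\,A_{k-1}^{-1}$) against \eqref{recurrencerelation} for $x\,A_{k-1}^{-1}V_{k-1}(x)$, subtracts, and telescopes the sum over $k$ using $V^{(k)}_0=I_N$, $V^{(k)}_{-1}=\pmb 0$, then lets $y\to x$ for the confluent form and argues symmetrically for the $G$-identities. Your bookkeeping of the three cancellations and the surviving boundary terms matches the paper's computation.
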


\begin{proof}
For $k\leq n$
\begin{gather*}
y \, V_{n-k}^{(k)}(y) \, A_{k-1}^{-1}
 =
V_{n-k+1}^{(k-1)}(y)+V_{n+k}^{(k)}(y) \, A_{k-1}^{-1} \, B_{k-1}+V^{(k+1)}_{n-k-1}(y) \, A_{k}^{-1} \, C_k \, , \\
x \, A_{k-1}^{-1} V_{k-1}(x)
 =
V_k(x) + A_{k-1}^{-1} B_{k-1} \, V_{k-1}(x)+A_{k-1}^{-1} C_{k-1} \, V_{k-2}(x) \, .
\end{gather*}
From here
\begin{multline*}
\phantom{o}
\hspace{-.75cm}
(y-x) \, V_{n-k}^{(k)}(y) \, A_{k-1}^{-1} V_{k-1}(x) = \Big( V_{n-k+1}^{(k-1)}(y) \, V_{k-1}(x)
-V_{n-k}^{(k)}(y) \, A_{k-1} \, C_{k-1} \, V_{k-2}(x)\Big) \\
-\big(
V_{n-k}^{(k)}(y) \, V_{k}(x)-V_{n-k-1}^{(k+1)}(y) \, A_{k} \, C_{k} \, V_{k-1}(x)
\big) \, .
\end{multline*}
Summing the above on $k$ from $1$ to $n$ and taking into account that for every $k\in \mathbb N \, $, $V^{(k)}_{0}(x)=I_N $ and $V^{(k)}_{-1}(x)=\pmb 0 $ we get the result. The confluent form is obtained when $y$ tends to $x$. Formulas for the sequences $ \big\{ G_n \big\}_{n\in\mathbb N} \, $
and
$\big\{ G_n^{(1)} \big\}_{n\in\mathbb N} \, $ are deduced in a similar way.
\end{proof}

As a consequence of Proposition~\ref{cris1}, we find that for $k=1, \ldots ,n $,
\begin{gather*}
V^{(k)}_{n-k}(x) \, A^{-1}_{k-1}
=\sum_{j=1}^{n}V^{(j)}_{n-j}(x) \, A_{j-1}^{-1}\prodint{V_{j-1}(y),G_{k-1}(y)}_u  =\prodint{\frac{V_n(y)-V_n(x)}{y-x},G_{k-1}}_u \, , \\
C_k^{-1}G^{(k)\top}_{n-k}(x)
=\sum_{j=1}^{n}\prodint {V_{k-1}(y), G_{j-1}(y)}_u C_j^{-1}G^{(j)\top}_{n-j}(x)
\phantom{olaolaolaolaolaolaolaol}
   \\
\phantom{olaolaolaolaolaolaolaolaolaol} =\prodint{ V_{k-1}(y),\frac{G_n(y)-G_n(x)}{y-x}}_u \, .
\end{gather*}
When $k=1$, we recover the classical formula for $V^{(1)}_{n-1}(x)$, 
$G^{(1)}_{n-1}(x)$.
Algebraic manipulations for the above equations yield
\begin{gather*}
V^{(k)}_{n-k}(x) \, A_{k-1}^{-1}=V_n(x) \, R^\top_{k-1}(x)-Q_{n}(x) \, G^\top_{k-1}(x) \, ,
 \\[1.25pt]
C_{k}^{-1} G^{(k)\top}_{n-k}(x)=Q_{k-1}(x) \, G^\top_n(x)-V_{k-1}(x) \, R^\top_{n}(x) \, .
\end{gather*}
The above, together with Proposition~\ref{prop10}, yield
\begin{gather*}
\phantom{o}
\hspace{-.5cm}
R_{k-1}^\top \, A_{k-1}= \Theta_*\left(
\begin{matrix}
Q_{k-1}(x)&V_{k-1}(x) \\[1.25pt]
Q_{k}(x)&V_{k}(x)
\end{matrix}\right)^{-1} ,
G^\top_{k-1}(x) \, A_{k-1}=\Theta_*
\left(
\begin{matrix}
V_{k-1}(x)&Q_{k-1}(x) \\[1.25pt]
V_{k}(x)&Q_{k}(x)
\end{matrix}
\right)^{-1} , \\
C_k \, Q_{k-1}(x)= \Theta_*
\left(\begin{matrix}
R^\top _{k-1}(x)&R^\top _{k}(x) \\[1.25pt]
G^\top _{k-1}(x)&G^\top _{k}(x)
\end{matrix}
\right)^{-1} ,
C_k \, V_{k-1}(x)=\Theta_*
\left( \begin{matrix}
G^\top _{k-1}(x)&G^\top _{k}(x) \\[1.25pt]
R^\top _{k-1}(x)&R^\top _{k}(x)
\end{matrix} \right)^{-1} .
\end{gather*}

\section{Outer Ratio Asymptotics}\label{sectio5}

First of all, we are going to state two important theorems which can be find in~\cite{AB_Paco_AM2,AB_Paco_AM1}, see also~\cite{Duran1,Duran4}.
\begin{teo}[Markov type Theorem]
Let $\big\{ V_n \big\}_{n\in\mathbb N} \, $ and $\big\{ G_n \big\}_{n\in\mathbb N} \, $ be the sequence of matrix biorthogonal polynomials with respect to $u$ and $\big\{ V^{(1)}_n \big\}_{n\in\mathbb N} \, $ and $\big\{ G^{(1)}_n \big\}_{n\in\mathbb N} \, $ be the corresponding sequences of associated polynomials. Then,
\begin{gather*}
\lim_{n \to \infty} V_{n}^{-1}(x) \, V_{n-1}^{(1)}(x)
 = \prodint {\frac{I_N }{x-y},I_N }_u \, A_0 \, , \ \ x \in \mathbb C \setminus\Gamma^{(0)} \\
\lim_{n \to \infty} G_{n-1}^{(1)\top}(x) \, G_{n}^{-\top}(x)
 = C_1 \, \prodint {\frac{I_N }{x-y},I_N }_u \, , \ \ x \in \mathbb C \setminus\Gamma^{(0)}
\end{gather*}
and the convergence holds uniformly on compact subsets of $\mathbb C \setminus\Gamma^{(0)} $.
\end{teo}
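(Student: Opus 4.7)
The natural approach is to exploit the identities~\eqref{aso} and~\eqref{aso1}, which express the first associated polynomials in terms of the biorthogonal polynomials and the matrix second kind functions, and thereby reduce the theorem to a decay estimate for the $Q_n$'s and $R_n^\top$'s on compacta outside $\Gamma^{(0)}$. Solving~\eqref{aso} for $V_{n-1}^{(1)}(x)$ and pre-multiplying by $V_n^{-1}(x)$ (which exists for $x$ on any compact $K \subset \mathbb C \setminus \Gamma^{(0)}$ and all $n$ sufficiently large, by~\eqref{zer}) gives
\begin{gather*}
V_n^{-1}(x) \, V_{n-1}^{(1)}(x) = Q_0(x) \, A_0 - V_n^{-1}(x) \, Q_n(x) \, A_0 \, ,
\end{gather*}
and a symmetric manipulation of~\eqref{aso1} yields
\begin{gather*}
G_{n-1}^{(1)\top}(x) \, G_n^{-\top}(x) = C_1 \, R_0^\top(x) - C_1 \, R_n^\top(x) \, G_n^{-\top}(x) \, .
\end{gather*}
Since $R_0^\top(x) = \prodint{\frac{I_N}{x-y},I_N}_u$ (and $Q_0(x) A_0$ is the corresponding Markov-type expression on the $V$-side), the theorem reduces to proving that $V_n^{-1}(x)\, Q_n(x) \to \pmb 0$ and $R_n^\top(x)\, G_n^{-\top}(x) \to \pmb 0$, uniformly on compacta of~$\mathbb C \setminus \Gamma^{(0)}$.

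Next, for $|x|$ strictly larger than the Gershgorin radius~$M$ bounding $\operatorname{supp}(u)$ (cf.~the discussion following~\eqref{zer}), I would expand $(x-y)^{-1}$ in its Neumann series inside the sesquilinear form defining $Q_n$; the biorthogonality relation $\prodint{V_n(y),\, y^k I_N}_u = \pmb 0$ for $k < n$ (obtained from $\prodint{V_n,G_m}_u = \delta_{n,m}I_N$ by writing $y^k I_N$ in the basis $\{G_0,\dots,G_k\}$) collapses the series to $Q_n(x) = O(|x|^{-n-1})$. Combined with $V_n^{-1}(x) = O(|x|^{-n})$, valid because $V_n$ has exact degree $n$ with nonsingular leading coefficient, this yields $V_n^{-1}(x)\, Q_n(x) = O(|x|^{-2n-1})$, vanishing on any neighbourhood of infinity; a symmetric argument handles $R_n^\top(x)\, G_n^{-\top}(x)$.

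The final step is to propagate this near-infinity decay to an arbitrary compact $K \subset \mathbb C \setminus \Gamma^{(0)}$. By the definition~\eqref{zer}, $V_n$ and $G_n$ are nonsingular on $K$ for $n$ large, so both sequences are holomorphic on~$K$; using the Liouville--Ostrogradski relations~\eqref{os1}--\eqref{os2} to express $V_n^{-1}(x)\, Q_n(x)$ and $R_n^\top(x)\, G_n^{-\top}(x)$ in terms of the Casorati-type quasi-determinants of Theorem~\ref{teorema2}, together with the explicit basis decomposition for~$\mathbb S$ developed in Section~\ref{sectio4}, I would derive locally uniform bounds on $\|V_n^{-1}(x)\|$ and $\|G_n^{-\top}(x)\|$ on~$K$. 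Vitali's convergence theorem then upgrades the pointwise limit near infinity to the claimed uniform convergence.

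The delicate point, and in my view the main obstacle, is precisely this third step. In the scalar case, locally uniform bounds on $V_n^{-1}$ on compacta outside $\operatorname{supp}(\mu)$ follow immediately from Poincar\'e's theorem on difference equations; but as emphasised in the Introduction, Poincar\'e's theorem is no longer valid in the matrix setting. Consequently, uniform control of the inverses must be extracted solely from the clustering of the zero sets encoded by~$\Gamma^{(0)}$, through the Christoffel--Darboux type formulas of Proposition~\ref{pro:cdt} and the quasi-determinantal representations furnished by Theorem~\ref{teorema2}; this is where the genuine matrix difficulty concentrates.
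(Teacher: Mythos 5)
First, note that the paper does not actually prove this theorem: it is quoted as one of ``two important theorems which can be find in~\cite{AB_Paco_AM2,AB_Paco_AM1}, see also~\cite{Duran1,Duran4}'', so there is no internal proof to compare yours against; your attempt has to be judged on its own. Your opening reduction is correct and clean: from~\eqref{aso} one gets $V_n^{-1}(x)\,V_{n-1}^{(1)}(x)=Q_0(x)\,A_0-V_n^{-1}(x)\,Q_n(x)\,A_0$ and from~\eqref{aso1} the analogous identity on the $G$-side, and since $Q_0(x)=R_0^{\top}(x)=\prodint{\frac{I_N}{x-y},I_N}_u$, the theorem is indeed equivalent to $V_n^{-1}Q_n\to\pmb 0$ and $R_n^{\top}G_n^{-\top}\to\pmb 0$ locally uniformly off $\Gamma^{(0)}$. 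Be aware, however, that in the paper these last limits (the limits~\eqref{12} of the Corollary) are \emph{deduced from} the Markov theorem via~\eqref{aso}--\eqref{aso1}; so your route is the exact reverse, and everything hinges on proving the decay of $V_n^{-1}Q_n$ independently. That is where your argument is incomplete.

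Two concrete gaps. (i) Your Neumann-series estimate near infinity produces bounds whose constants depend on $n$: $\|Q_n(x)\|\lesssim \|V_n\|_{\infty,\pmb{\operatorname{D}}_M}M^{n}|x|^{-n-1}$ and $\|V_n^{-1}(x)\|\lesssim\|A_0\cdots A_{n-1}\|\,|x|^{-n}$, so the product tends to $\pmb 0$ (and then only for $|x|$ larger than some radius determined by those constants) only if you first control the geometric growth of $\|V_n\|$ on $\pmb{\operatorname{D}}_M$ and of the inverse leading coefficients — this already uses boundedness of the recurrence coefficients and a representation of $u$ allowing moment bounds, neither of which you state. (ii) The decisive step, passing from a neighbourhood of infinity to arbitrary compacta of $\mathbb C\setminus\Gamma^{(0)}$ by Vitali/Stieltjes--Vitali, requires local uniform boundedness of the sequences $V_n^{-1}Q_n$ and $R_n^{\top}G_n^{-\top}$ (and connectedness of the region in which you apply it). You propose to extract such bounds from the Liouville--Ostrogradski formulas~\eqref{os1}--\eqref{os2} and the Casorati identity of Theorem~\ref{teorema2}, but these are exact algebraic relations between the solutions; they give no norm control of $V_n^{-1}$ or $G_n^{-\top}$ on a compact set by themselves, and you concede as much (``this is where the genuine matrix difficulty concentrates''). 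That missing normal-family estimate is precisely the hard core of the matrix Markov theorem, which the cited references obtain through quadrature-type representations of $V_n^{-1}V_{n-1}^{(1)}$ together with Nevai-class/determinacy hypotheses; without it your proof does not close, and the limit claimed in the theorem is not established on $\mathbb C\setminus\Gamma^{(0)}$ but only (at best, after repairing (i)) near infinity.
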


In the sequel, given three matrices $\alpha,\gamma$ (nonsingular) and $\beta $, we define the {\it left-orthogonal second kind Chebyshev matrix polynomials},
$\big\{ U_n^{\gamma,\beta,\alpha } \big\}_{n\in\mathbb{N}}$, by the recurrence formula
\begin{gather}\label{Chebyshev_recurrence_relation}
x \, U_n^{\gamma,\beta,\alpha }(x)=\gamma \, U_{n+1}^{\gamma,\beta,\alpha }(x)+\beta \, U_n^{\gamma,\beta,\alpha }(x) + \alpha \, U_{n-1}^{\gamma,\beta,\alpha}(x), \ \ n\geq 0 \, ,
\end{gather}
with initial conditions
$U_0^{\gamma,\beta,\alpha }(x)=I_N $
and
$U_{-1}^{\gamma,\beta,\alpha }(x)=\pmb 0 $,
as well as
the {\it right-orthogonal second kind Chebyshev matrix polynomial}, $ \big\{ T_{m}^{\alpha ,\beta ,\gamma}(x) \big\}_{m\in\mathbb{N}}$, 
given~by
\begin{gather*}\label{right_chebyshev_matrix_polynomials}
x \, T_{n}^{\alpha ,\beta ,\gamma}(x)=T_{n+1}^{\alpha ,\beta ,\gamma}(x) \, \alpha +T_{n}^{\alpha ,\beta ,\gamma}(x) \, \beta +T_{n-1}^{\alpha ,\beta ,\gamma}(x) \, \gamma \, , \ \ n\geq 0 \, ,
\end{gather*}
with initial conditions
$T_0^{\alpha ,\beta ,\gamma}(x)=I_N $
and
$T_{-1}^{\alpha ,\beta ,\gamma}(x)=\pmb 0 $.
We denote by $u^{\gamma,\beta ,\alpha }$ the matrix of linear functionals for which the polynomials $U_n^{\gamma,\beta ,\alpha }(x)$
and
$T_{n}^{\alpha,\beta,\gamma}(x)$ are biorthogonal.
\begin{teo} [Outer Ratio Asymptotics] \label{teo4}
Let $ \big\{ V_n \big\}_{n\in\mathbb N} \, $ and $\big\{ G_n \big\}_{n\in\mathbb N} \, $
 be the sequences of matrix biorthonormal polynomials with respect to $u \, $. If $A_n\to A$, $B_n\to B$, and $C_n\to C$ with~$A, C$ nonsingular matrices, then
\begin{gather*}
\lim_{n\to\infty}V_{n-1}(x) \, V^{-1}_n(x) \, A_{n-1}^{-1}=\prodint {\frac{I_N }{x-y},I_N }_{u^{C,B,A}} \, , \ \ x\in\mathbb C \setminus \Gamma^{(0)} \, , \\
\lim_{n\to\infty}C_{n}^{-1} \, G^{-\top}_{n}(x) \, G^\top_{n-1}(x)=\prodint {\frac{I_N }{x-y},I_N }_{u^{C,B,A}} \, , \ \ x\in\mathbb C \setminus \Gamma^{(0)} \, ;
\end{gather*}
and the convergence holds uniformly on compact subsets of $\mathbb C \setminus\Gamma^{(0)} $.
Moreover, if
$
F_{C,B,A}(x)=\prodint {\frac{I_N }{x-y},I_N }_{u^{C,B,A}} \, $,
then $F_{C,B,A}(x)$ is an analytic matrix function satisfying the matrix equation
\begin{gather*}
C \, F_{C,B,A}(x) \, A \, F_{C,B,A}(x)+(B-xI_N ) \, F_{C,B,A}(x)+I_N ={\bf 0} \, .
\end{gather*}
\end{teo}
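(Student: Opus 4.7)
The theorem has three assertions: the two ratio limits and the functional equation for $F_{C,B,A}$, which I treat separately.

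For the limits, the idea is to invoke the matrix analogue of Poincar\'e's theorem developed in \cite{AB_Paco_AM1,AB_Paco_AM2}, applied to the recurrence \eqref{eq.1} whose coefficients converge to the constant triple $(A,B,C)$. The limiting constant-coefficient recurrence is exactly \eqref{Chebyshev_recurrence_relation}, the defining recurrence of the Chebyshev matrix polynomials $U_n^{C,B,A}$, so the limit of $V_{n-1}(x)V_n^{-1}(x)A_{n-1}^{-1}$ should coincide with the analogous ratio for $U_n^{C,B,A}$. The Markov-type theorem stated just before the present statement identifies that latter ratio with $F_{C,B,A}(x)=\prodint{\frac{I_N}{x-y},I_N}_{u^{C,B,A}}$. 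The limit for $G_n$ follows from the same strategy applied to \eqref{eq.2}, or equivalently by a transposition argument once the first limit is established.

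Granted the first limit, the functional equation is obtained by passing to the limit directly in the recurrence. Multiplying \eqref{eq.1} on the right by $V_{n+1}^{-1}(x)A_n^{-1}$ gives
\begin{equation*}
x\,V_n V_{n+1}^{-1} A_n^{-1} = I_N + B_n\, V_n V_{n+1}^{-1} A_n^{-1} + C_n\,\bigl(V_{n-1}V_n^{-1}A_{n-1}^{-1}\bigr)\,A_{n-1}\,\bigl(V_nV_{n+1}^{-1}A_n^{-1}\bigr).
\end{equation*}
Since $A_n\to A$, $B_n\to B$, $C_n\to C$, and both displayed ratios tend to $F_{C,B,A}(x)$ uniformly on compact subsets of $\mathbb{C}\setminus\Gamma^{(0)}$, we may let $n\to\infty$ to obtain
\begin{equation*}
x\,F_{C,B,A}(x) = I_N + B\,F_{C,B,A}(x) + C\,F_{C,B,A}(x)\,A\,F_{C,B,A}(x),
\end{equation*}
which rearranges to the stated quadratic matrix equation. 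Analyticity of $F_{C,B,A}$ on $\mathbb{C}\setminus\Gamma^{(0)}$ is immediate from its definition as a matrix Cauchy transform, using the Gershgorin-disk observation after \eqref{zer} to locate the support of $u^{C,B,A}$ inside $\Gamma^{(0)}$.

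\textbf{Main obstacle.} The delicate step is the outer ratio convergence itself. As the authors emphasize, classical Poincar\'e is unavailable in the noncommutative setting, so one cannot just quote it. The expected substitute uses the Casorati-determinant identity of Theorem~\ref{teorema2} and the free-module structure of the solution space $\mathbb S$ developed in Section~\ref{sectio4} to represent $V_n$ and its ratios through the independent pair $(V_n,V_{n-1}^{(1)})$, then compares with the constant-coefficient (Chebyshev) case via a perturbation argument inside the uniform bounds provided by Gershgorin. Once this uniform convergence is secured, both the functional equation and the analyticity follow routinely as above.
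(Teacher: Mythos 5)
There is no in-paper proof to compare against: the paper states Theorem~\ref{teo4} as a known result, imported from \cite{AB_Paco_AM2,AB_Paco_AM1} (see also \cite{Duran1,Duran4}). Judged as a standalone argument, your proposal has a genuine gap: the two ratio limits, which are the actual content of the theorem, are never established. You first propose to invoke a ``matrix analogue of Poincar\'e's theorem'', but, as the introduction stresses (and as you concede in your final paragraph), Poincar\'e's theorem is not valid in the noncommutative setting and there is no such analogue to quote; and the claim that the limit of $V_{n-1}(x)\,V_n^{-1}(x)\,A_{n-1}^{-1}$ ``should coincide with the analogous ratio for $U_n^{C,B,A}$'' is exactly the Nevai-class perturbation statement that has to be proved. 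Moreover, the Markov-type theorem you appeal to identifies $\lim_n V_n^{-1}(x)\,V_{n-1}^{(1)}(x)$ with $\prodint{\frac{I_N}{x-y},I_N}_u A_0$, i.e.\ the Cauchy transform of $u$ itself, not of the Chebyshev functional $u^{C,B,A}$, so it does not deliver the identification you need. Your ``main obstacle'' paragraph names plausible ingredients (the Casorati identity of Theorem~\ref{teorema2}, the Gershgorin bounds after \eqref{zer}) but remains a plan rather than an argument; the proofs in \cite{AB_Paco_AM1,Duran1} instead run through a compactness scheme (uniform boundedness of the ratios on compact subsets of $\mathbb C\setminus\Gamma^{(0)}$, extraction of locally uniformly convergent subsequences, showing each limit is the Cauchy transform of a weak-* limit functional satisfying the quadratic equation, and a uniqueness step), much in the spirit of the proof of Theorem~\ref{teorema 6} given later in the paper.

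What you do have is the conditional part: granting the two limits, multiplying \eqref{eq.1} on the right by $V_{n+1}^{-1}(x)\,A_n^{-1}$ and letting $n\to\infty$ is a correct and clean way to obtain
\begin{gather*}
C \, F_{C,B,A}(x)\, A \, F_{C,B,A}(x)+(B-xI_N)\,F_{C,B,A}(x)+I_N=\pmb 0 \, ,
\end{gather*}
and analyticity of $F_{C,B,A}$ off the support is immediate from its Cauchy-transform definition. But these are the routine consequences; without an actual proof (or an explicit citation) of the ratio convergence itself, the proposal does not establish the theorem.
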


\begin{cor}
With the conditions of Theorem~\ref{teo4},
for $x\in\mathbb C \setminus\Gamma^{(0)}$, the following limits hold
 uniformly on compact subsets of $\mathbb C \setminus\Gamma^{(0)} $:
\begin{gather} 	
 \lim_{n\to\infty}V^{-1}_{n}(x) \, Q_{n}(x)=\pmb 0 \, , \ \ \lim_{n\to\infty}G^{-1}_{n}(x) \, R_{n}(x)=\pmb 0 \, , \label{12} \\
\lim_{n\to \infty} V_{n-1}^{-1}(x)C^{-1}_{n}G_n^{-\top}(x)={\bf 0} \, , \ \
\lim_{n\to \infty}
 V_{n}^{-1} \, A^{-1}_{n-1} \, G_{n-1}^{-\top}(x)={\bf 0}\label{13} \, .
\end{gather}
\end{cor}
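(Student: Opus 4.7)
My plan is to reduce the four limits to the Markov-type Theorem and the Liouville-Ostrogradski identities already established in the paper. The four statements split naturally into two pairs.

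For \eqref{12}, the key tool is the representation \eqref{aso}, namely $Q_n(x)=V_n(x)\,Q_0(x)-V^{(1)}_{n-1}(x)\,A_0^{-1}$. Multiplying on the left by $V_n^{-1}(x)$ yields
\begin{gather*}
V_n^{-1}(x)\,Q_n(x)=Q_0(x)-V_n^{-1}(x)\,V^{(1)}_{n-1}(x)\,A_0^{-1}.
\end{gather*}
The Markov-type Theorem gives $V_n^{-1}(x)\,V^{(1)}_{n-1}(x)\to \prodint{\frac{I_N}{x-y},I_N}_u A_0$ uniformly on compact subsets of $\mathbb{C}\setminus\Gamma^{(0)}$, and by the symmetry of the sesquilinear form induced by a matrix of functionals this limit equals $Q_0(x)\,A_0$. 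Hence $V_n^{-1}(x)\,Q_n(x)\to Q_0(x)-Q_0(x)=\pmb 0$. For the second limit I apply the transposed argument to \eqref{aso1}, writing $R_n^\top(x)\,G_n^{-\top}(x)=R_0^\top(x)-C_1^{-1}\,G^{(1)\top}_{n-1}(x)\,G_n^{-\top}(x)$, and use the second half of the Markov-type Theorem to get $G^{(1)\top}_{n-1}(x)\,G_n^{-\top}(x)\to C_1\,R_0^\top(x)$; transposing gives $G_n^{-1}(x)\,R_n(x)\to\pmb 0$.

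For \eqref{13}, I rearrange the Liouville-Ostrogradski formulas \eqref{os1} and \eqref{os2}. Multiplying \eqref{os1} on the right by $G_n^{-\top}(x)$ gives
\begin{gather*}
C_n^{-1}\,G_n^{-\top}(x)=Q_{n-1}(x)-V_{n-1}(x)\,R_n^\top(x)\,G_n^{-\top}(x),
\end{gather*}
and multiplying by $V_{n-1}^{-1}(x)$ on the left yields
\begin{gather*}
V_{n-1}^{-1}(x)\,C_n^{-1}\,G_n^{-\top}(x)=V_{n-1}^{-1}(x)\,Q_{n-1}(x)-R_n^\top(x)\,G_n^{-\top}(x).
\end{gather*}
Both summands on the right tend to $\pmb 0$ by \eqref{12} (the second one via its transposed form, and both with the appropriate index shift), so the left-hand side tends to $\pmb 0$. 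The same manipulation applied to \eqref{os2}, this time multiplying on the right by $G_{n-1}^{-\top}(x)$ and on the left by $V_n^{-1}(x)$, produces
\begin{gather*}
V_n^{-1}(x)\,A_{n-1}^{-1}\,G_{n-1}^{-\top}(x)=R_{n-1}^\top(x)\,G_{n-1}^{-\top}(x)-V_n^{-1}(x)\,Q_n(x),
\end{gather*}
whose right-hand side again tends to $\pmb 0$ by \eqref{12}.

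The only delicate point I anticipate is making sure that the Markov-type limit $\prodint{\frac{I_N}{x-y},I_N}_u A_0$ is genuinely identified with $Q_0(x)\,A_0$; this is a bookkeeping matter about the two arguments of the sesquilinear form when one of them is $I_N$, and follows from the componentwise definition $(\prodint{P,Q}_u)_{i,j}=\sum_{k,l}\prodint{u_{k,l},P_{i,k}Q_{j,l}}$. Uniform convergence on compact subsets of $\mathbb{C}\setminus\Gamma^{(0)}$ is inherited throughout because in each step I am combining uniformly convergent factors with fixed or uniformly bounded factors on the same compact set.
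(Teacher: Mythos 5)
Your proposal is correct and follows essentially the same route as the paper: the limits in \eqref{12} come from the representations \eqref{aso}--\eqref{aso1} combined with the Markov-type theorem (with the identification $\prodint{\frac{I_N}{x-y},I_N}_u = Q_0(x)=R_0^\top(x)$, which is exactly the bookkeeping you flag), and the limits in \eqref{13} come from dividing the Liouville--Ostrogradski formulas \eqref{os1}--\eqref{os2} on the left by $V_{n-1}$ (resp.\ $V_n$) and on the right by $G_n^{-\top}$ (resp.\ $G_{n-1}^{-\top}$) and invoking \eqref{12}. The identities you derive are precisely those displayed in the paper's proof, so no further comparison is needed.
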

\begin{proof}
The limits in~\eqref{12} are obtained from~\eqref{aso}-\eqref{aso1} and Markov theorem. On the other hand, from Liouville-Ostrogradski formulas we~get
\begin{align*}
V^{-1}_{n-1}(x) \, Q_{n-1}(x)-R_n^{\top}(x) \, G_n^{-\top}(x)=V_{n-1}^{-1}(x) \, C_n^{-1} \, G_n^{-\top} \, , \\
R^{\top}_{n-1}(x) \, G_{n-1}^\top(x)-V_{n}(x) \, Q_n(x)=V_{n}^{-1}(x) \, A_{n-1}^{-1} \, G_{n-1}^{-\top} \, .
\end{align*}
Taking the limit when $n\to\infty$ in the above identities we get~\eqref{13}.
\end{proof}

\begin{teo}
Let assume the moment problem for $u$ is determined and $\big\{ V_n \big\}_{n\in\mathbb N} \, $ and $\big\{ G_n \big\}_{n\in\mathbb N} \, $ be the sequence of matrix biorthogonal polynomials with respect to $u$, as well as $\big\{ V^{(k)}_n \big\}_{n\in\mathbb N} \, $ and $\big\{ G^{(k)}_n \big\}_{n\in\mathbb N} \, $ be the respective sequences of the $k$-th associated polynomials. Then, for all $k\in\mathbb N \, $, the following~limits
\begin{gather*}
\lim_{n\to\infty}V^{-1}_n(x) \, V^{(k)}_{n-k}(x)=R^\top_{k-1}(x) \, A_{k-1} \, ,
\ \ x \in \mathbb C \setminus\Gamma^{(k)} \\
\lim_{n\to\infty}G^{(k)\top}_{n-k}(x) \, G^{-\top}_n(x)=C_k \, Q_{k-1}(x) \, ,
\ \ x \in \mathbb C \setminus\Gamma^{(k)}
\end{gather*}
holds uniformly on compact subsets of $\mathbb C \setminus\Gamma^{(k)} $.
\end{teo}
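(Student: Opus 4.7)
The plan is to exploit the two identities established at the end of Section~\ref{sectio4}, namely
\begin{gather*}
V^{(k)}_{n-k}(x) \, A_{k-1}^{-1} = V_n(x) \, R^\top_{k-1}(x) - Q_{n}(x) \, G^\top_{k-1}(x) \, , \\
C_{k}^{-1} \, G^{(k)\top}_{n-k}(x) = Q_{k-1}(x) \, G^\top_n(x) - V_{k-1}(x) \, R^\top_{n}(x) \, ,
\end{gather*}
which already split the object of interest into the candidate limit plus a ``remainder'' expressed in terms of quantities whose decay is controlled by the Corollary to Theorem~\ref{teo4}.

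\textbf{Step 1 (algebraic preparation).} For those $x$ at which $V_n(x)$ is nonsingular, I multiply the first identity on the left by $V_n^{-1}(x)$ and on the right by $A_{k-1}$, obtaining
\begin{gather*}
V_n^{-1}(x) \, V^{(k)}_{n-k}(x) = R^\top_{k-1}(x) \, A_{k-1} - \bigl(V_n^{-1}(x) \, Q_n(x)\bigr) \, G^\top_{k-1}(x) \, A_{k-1} \, .
\end{gather*}
In the same way, multiplying the second identity on the right by $G_n^{-\top}(x)$ and on the left by $C_k$ gives
\begin{gather*}
G^{(k)\top}_{n-k}(x) \, G^{-\top}_n(x) = C_k \, Q_{k-1}(x) - C_k \, V_{k-1}(x) \, \bigl(R^\top_n(x) \, G^{-\top}_n(x)\bigr) \, .
\end{gather*}

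\textbf{Step 2 (passage to the limit on $\mathbb C\setminus\Gamma^{(0)}$).} The Corollary to Theorem~\ref{teo4} provides $\lim_n V_n^{-1}(x) Q_n(x) = \pmb 0$ and, after transposition, $\lim_n R_n^\top(x) G_n^{-\top}(x) = \pmb 0$, both uniformly on compact subsets of $\mathbb C\setminus\Gamma^{(0)}$. The factors $G^\top_{k-1}(x) A_{k-1}$ and $C_k V_{k-1}(x)$ are fixed matrix polynomials in~$x$, hence bounded on every compact set, so the correction terms vanish uniformly. This proves both identities on $\mathbb C\setminus\Gamma^{(0)}$.

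\textbf{Step 3 (the main obstacle: upgrading the domain to $\mathbb C\setminus\Gamma^{(k)}$).} The delicate point is that the formulas from Step~1 only make sense where $V_n(x)$ (respectively $G_n(x)$) is invertible, and the zero set of $V_n$ accumulates on $\Gamma^{(0)}$, which a priori is larger than $\Gamma^{(k)}$. To reach $\mathbb C\setminus\Gamma^{(k)}$ I would use the determinacy hypothesis: it forces the truncated $k$-th associated Jacobi matrix $J^{(k)}_n$ to have its spectrum asymptotically concentrated on $\Gamma^{(k)}$, so that for every compact $K\subset \mathbb C\setminus\Gamma^{(k)}$ the rational matrix functions $V_n^{-1}(x)V^{(k)}_{n-k}(x)$ are eventually holomorphic on~$K$ and uniformly bounded there (by a normal-family/minimum-modulus estimate on the entries of $V_n(x)^{-1}$, combined with the polynomial bounds on $V^{(k)}_{n-k}(x)$ given by the recurrence~\eqref{reasociados}). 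Any subsequential limit must be holomorphic on $\mathbb C\setminus\Gamma^{(k)}$ and must agree on $\mathbb C\setminus\Gamma^{(0)}$ with $R^\top_{k-1}(x) A_{k-1}$ by Step~2; the identity principle then forces uniqueness of the limit and uniform convergence on~$K$. The second limit is handled analogously with $G_n^{-\top}(x) G^{(k)\top}_{n-k}(x)$ replaced by the transpose, using the symmetric Liouville--Ostrogradski identity~\eqref{os2} and Lemma~\ref{lema1} to control the remainder.
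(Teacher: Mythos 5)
Your route is genuinely different from the paper's. The paper proves the first limit by induction on $k$: the case $k=1$ is read off from the Markov type theorem, and the inductive step solves \eqref{buena} for $V^{(k+1)}_{n-k-1}(x)$, multiplies by $V_n^{-1}(x)$, applies the induction hypothesis, and identifies the resulting expression with $R^\top_k(x)\,A_k$ through the recurrence relation satisfied by the second kind functions. Your Steps 1--2 instead start from the closed connection formulas $V^{(k)}_{n-k}(x)\,A_{k-1}^{-1}=V_n(x)\,R^\top_{k-1}(x)-Q_n(x)\,G^\top_{k-1}(x)$ and $C_k^{-1}G^{(k)\top}_{n-k}(x)=Q_{k-1}(x)\,G^\top_n(x)-V_{k-1}(x)\,R^\top_n(x)$ from the end of Section~\ref{sectio4}, so the whole statement reduces to $V_n^{-1}(x)\,Q_n(x)\to\pmb 0$ and $R_n^\top(x)\,G_n^{-\top}(x)\to\pmb 0$; this is shorter than the induction, dispenses with \eqref{buena}, and treats all $k$ at once. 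One caveat on sourcing: do not quote these decay limits via the corollary to Theorem~\ref{teo4}, which is stated under the Nevai-class hypotheses $A_n\to A$, $B_n\to B$, $C_n\to C$ that are not assumed in the present theorem; the limits \eqref{12} follow directly from \eqref{aso}--\eqref{aso1} and the Markov type theorem, which is all the determinacy hypothesis buys you, so cite that route instead.

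The genuine gap is Step 3. The singularities of $V_n^{-1}(x)\,V^{(k)}_{n-k}(x)$ sit at the zeros of $\det V_n(x)$, i.e.\ at the eigenvalues of the truncations $J_n$ of $J=J^{(0)}$, and these accumulate on $\Gamma^{(0)}$; the spectrum of $J^{(k)}_n$, which is what your determinacy argument controls, locates only the zeros of the associated polynomials $V^{(k)}_n$ and says nothing about the poles coming from $V_n^{-1}$. Hence the claim that the ratios are eventually holomorphic and uniformly bounded on a compact $K\subset\mathbb C\setminus\Gamma^{(k)}$ that meets $\Gamma^{(0)}$ is unsupported, and without that bound the normal-family/identity-principle step has nothing to run on (one would additionally need connectedness of the region to propagate the identification of the limit from $\mathbb C\setminus\Gamma^{(0)}$). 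For what it is worth, the paper's own proof has the same limitation: its base case is the Markov theorem on $\mathbb C\setminus\Gamma^{(0)}$, and its inductive step concludes uniform convergence on compacts of $\mathbb C\setminus\Gamma^{(k)}$ rather than $\mathbb C\setminus\Gamma^{(k+1)}$. So your Steps 1--2 already establish what the paper's argument actually delivers, namely uniform convergence on compact subsets avoiding $\Gamma^{(0)}$ (and $\Gamma^{(k)}$), but Step 3 as written does not close the passage to all of $\mathbb C\setminus\Gamma^{(k)}$.
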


\begin{proof}
We only prove the first formula. The second one follows in a similar way. We use induction on $k$. When $k=1$ the result is a straightforward consequence of the matrix Markov theorem. Now assume that the result holds true for a~$k$, by~\eqref{buena}
we have that
\begin{gather*}
V^{(k+1)}_{n-k-1}(x)=
\big(x \, V_{n-k}^{(k)}(x) \, A^{-1}_{k-1} - V_{n-k+1}^{(k-1)}(x) - V_{n-k}^{(k)}(x) \, A_{k-1}^{-1} \, B_{k-1}
\big) \, C^{-1}_k \, A_{k} \, ,
\end{gather*}
so, from induction hypothesis
\begin{gather*}
\lim_{n\to\infty}
V^{-1}_n(x) \, V^{(k+1)}_{n-k-1}(x)
=(x \, R^{\top}_{k-1}(x)-R^{\top}_{k-2}(x) \, A_{k-2}
-Q^{\top}_{k-1}(x) \, B_{k-1}) \, C_{k}^{-1} \, A_k \, ,
\end{gather*}
and so the convergence of $\big\{ V^{-1}_n(x) \, V^{(k+1)}_{n-k-1}(x) \big\} $ holds uniformly on compact subsets of $\mathbb C \setminus\Gamma^{(k)} $ to $R^\top_{k}(x) \, A_{k}$.
\end{proof}

Let $u^{(k)}$ be the matrix of linear functionals and $\big\{ V^{(k)}_n \big\}_{n \in \mathbb N} \, $, $ \big\{ G^{(k)}_n \big\}_{n \in \mathbb N} \, $ the corresponding matrix biorthogonal polynomials with respect to $u$. From Markov theorem
\begin{align*}
\lim_{n\to\infty}(V^{(k)}_{n}(x))^{-1} V^{(k+1)}_{n-1}(x) &=\prodint{\frac{I_N }{x-y},I_N }_{u^{(k)}} A_{k} \, , \ \ x\in\mathbb C \setminus\Gamma^{(k)} \, , \\
\lim_{n\to\infty}G^{(k+1)\top}_{n-1}(x) \, (G^{(k)}_{n}(x))^{-\top}&= C_{k+1} \, \prodint{\frac{I_N }{x-y},I_N }_{u^{(k)}} \, , \ \ x\in\mathbb C \setminus\Gamma^{(k)} \, ,
\end{align*}
 and the fact that
\begin{gather*}
(V_{n}^{(k)}(x))^{-1} V_{n-1}^{(k+1)}(x)=(V_{n}^{(k)}(x))^{-1} V_{n+k}(x) \, V_{n+k}^{-1}(x) \, V_{n-1}^{(k+1)}(x) \, , \\
G_{n-1}^{(k+1)\top}(x) \, (G_{n}^{(k)}(x))^{-\top}= G_{n-1}^{(k+1)\top}(x) \, G_{n+k}^{-\top}(x) \, G^\top_{n+k}(x) \, (G_{n}^{(k)}(x))^{-\top} \, ,
\end{gather*}
we obtain that,
for every $x\in\mathbb C \setminus\Gamma^{(k)}$,
\begin{align}\label{limit}
&\prodint{\frac{I_N }{x-y},I_N }_{u^{(k)}}= A^{-1}_{k-1} \, R_{k-1}^{-\top}(x) \, R^\top_k(x) \, , \\ &\notag\prodint{\frac{I_N }{x-y},I_{p}}_{u^{(k)}}=Q_k(x) \, Q_{k-1}^{-1}(x) \, C^{-1}_{k} \, .
\end{align}

\begin{teo} \label{teorema 6}
Let $\big\{ V^{(k)}_{n} \big\}_{n\in\mathbb N} \, $ and $ \big\{ G^{(k)}_{n} \big\}_{n\in\mathbb N} \, $ be the sequences of $k$-th associated polynomials which are biorthonormal with respect to the matrix of  linear functionals $u^{(k)}$. If $A_n\to A$, $B_n\to B$, and $C_n\to C$ with $A, C$ nonsingular matrices,~then
\begin{gather*}
\lim_{k\to\infty}\prodint{ \frac{I_N }{z-x},I_{p}}_{u^{(k)}}=\prodint {\frac{I_N }{z-x},I_N }_{u^{A,B,C}}, \ \ z\in\mathbb C \setminus\bigcup_{k}\Gamma^{(k)} \, ,
\end{gather*}
and the convergence is uniform 
on compact subsets of
$
\mathbb C \setminus\bigcup_{k}\Gamma^{(k)} $. 	
\end{teo}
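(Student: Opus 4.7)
The strategy starts from identity~\eqref{limit}, which gives
\begin{gather*}
\prodint{\frac{I_N}{z-x}, I_p}_{u^{(k)}} = Q_k(z)\, Q_{k-1}^{-1}(z)\, C_k^{-1}, \qquad z \in \mathbb C \setminus \Gamma^{(k)}.
\end{gather*}
Setting $H_k(z) := Q_k(z)\, Q_{k-1}^{-1}(z)\, C_k^{-1}$, the theorem reduces to showing that $H_k(z) \to F_{A,B,C}(z) := \prodint{\frac{I_N}{z-x}, I_N}_{u^{A,B,C}}$ uniformly on compact subsets of $\mathbb C \setminus \bigcup_{k} \Gamma^{(k)}$.

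The first step is to derive a Riccati-type recursion for $H_k$. Since $\big\{ Q_n \big\}_{n\in\mathbb N}$ satisfies the same three-term recurrence~\eqref{recurrencerelation} as $\big\{ V_n \big\}_{n\in\mathbb N}$, substituting $Q_{k-1}\, Q_k^{-1} = C_k^{-1}\, H_k^{-1}$ into $z\, Q_k = A_k\, Q_{k+1} + B_k\, Q_k + C_k\, Q_{k-1}$ and multiplying on the right by $Q_k^{-1}\, C_{k+1}^{-1}$ gives
\begin{gather*}
H_{k+1}(z) = A_k^{-1}\bigl((z\, I_N - B_k) - H_k^{-1}(z)\bigr)\, C_{k+1}^{-1}.
\end{gather*}
Because $A_k \to A$, $B_k \to B$ and $C_k \to C$, any subsequential limit $H^*$ of $\big\{ H_k \big\}$ then satisfies the algebraic matrix Riccati equation
\begin{gather*}
A\, H^*\, C\, H^* + (B - z\, I_N)\, H^* + I_N = \pmb 0,
\end{gather*}
which is exactly the equation appearing in Theorem~\ref{teo4} after the swap $A \leftrightarrow C$, and hence is satisfied by $F_{A,B,C}(z)$.

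To upgrade this to convergence of the entire sequence I would deploy a normal families argument. By the Gershgorin estimate invoked right after~\eqref{zer}, the supports of all $u^{(k)}$ lie in a common disc $\pmb{\operatorname{D}}_M$, so every $H_k$ is analytic on $\mathbb C \setminus \Gamma^{(k)}$, locally uniformly bounded on compact subsets of $\mathbb C \setminus \bigcup_{k} \Gamma^{(k)}$, and tends to $\pmb 0$ as $z \to \infty$. The family $\big\{ H_k \big\}$ is therefore normal, and every cluster value is an analytic solution of the Riccati equation above that decays at infinity. The main obstacle is the uniqueness step: one must rule out other analytic solutions of this matrix Riccati equation having the same decay at infinity. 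This is the matrix analogue of the branch selection in the scalar discussion~\eqref{du}--\eqref{eq22}, and it follows from the spectral selection of the ``stable'' solution of the Riccati equation carried out in the proof of Theorem~\ref{teo4}. Once uniqueness is in place, the full sequence $H_k$ converges to $F_{A,B,C}$ uniformly on compact subsets of $\mathbb C \setminus \bigcup_{k} \Gamma^{(k)}$, as required.
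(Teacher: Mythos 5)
Your route (via \eqref{limit}, reducing the theorem to the convergence of $H_k(z)=Q_k(z)\,Q_{k-1}^{-1}(z)\,C_k^{-1}$, which is in fact the content of Corollary~\ref{coro3} that the paper deduces \emph{from} Theorem~\ref{teorema 6}) is genuinely different from the paper's, but it has a gap at its central step. From $H_{k_m}\to H^*$ along a subsequence you cannot conclude that $H^*$ satisfies the algebraic Riccati equation: the recursion $H_{k+1}=A_k^{-1}\big((zI_N-B_k)-H_k^{-1}\big)C_{k+1}^{-1}$ relates $H_k$ to $H_{k+1}$, and nothing forces $H_{k_m+1}$ to converge to the same limit $H^*$. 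At best, after a further extraction, you obtain that the cluster set of $\{H_k\}$ is invariant under the limiting map $H\mapsto A^{-1}\big((zI_N-B)-H^{-1}\big)C^{-1}$, which does not make each cluster point a fixed point. This is exactly the failure of Poincar\'e-type reasoning in the matrix setting that the introduction emphasizes (``the Poincar\'e's theorem is no longer valid''), so this step needs a real argument, not an appeal to the recursion alone; you also use $H_k^{-1}$, i.e.\ invertibility of $Q_{k-1}(z)$, without justification.

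The remaining ingredients are also asserted rather than proved. Local uniform boundedness of $H_k$ on compact subsets of $\mathbb C\setminus\bigcup_k\Gamma^{(k)}$ is not automatic in the biorthogonal (non positive definite) setting, and the decisive uniqueness claim --- that the matrix equation $A\,H\,C\,H+(B-zI_N)\,H+I_N=\pmb 0$ has a unique analytic solution decaying at infinity, and that every cluster value of $\{H_k\}$ is such a solution --- is nowhere available in the paper: Theorem~\ref{teo4} is quoted from the literature without proof, concerns the polynomial ratio $V_{n-1}V_n^{-1}A_{n-1}^{-1}$, and contains no classification of decaying solutions of the quadratic matrix equation (which at a fixed $z$ may have many solutions). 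The paper sidesteps all of this: it proves by induction on $\ell$, using the recurrence \eqref{Chebyshev_recurrence_relation} and $A_n\to A$, $B_n\to B$, $C_n\to C$, the moment asymptotics $\lim_{k\to\infty}\prodint{U^{A,B,C}_{\ell}(x),I_N}_{u^{(k)}}=I_N\,\delta_{\ell,0}$; then, arguing by contradiction, it extracts a weak-$*$ limit $v$ of the functionals $u^{(k)}$ (Banach--Alaoglu, common compact support in $\pmb{\operatorname{D}}_M$), identifies $v\equiv u^{A,B,C}$ through the basis $\{U^{A,B,C}_n\}$, and gets uniformity from the Stieltjes--Vitali theorem. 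To salvage your approach you would have to supply both the invariance-to-fixed-point argument (or replace it) and the uniqueness of the decaying solution; as written, the proof is incomplete.
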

\begin{proof}
Firs of all, we will prove that	
\begin{gather}\label{orto}
\lim_{k\to \infty} \prodint{ U^{A,B,C}_{\ell}(x),I_N }_{u^{(k)}}=
I_N \, \delta_{\ell,0}  \, ,
\end{gather}
Since $\big\{ V^{(k)}_n \big\}_{n\in \mathbb N} \, $ is a basis of the bimodule of matrix polynomials, then for each $\ell\in \mathbb N \, $ there exists a set of 
matrices $ \big(\Delta_{i,\ell,k} \big)_{i=0}^l$ such~that
\begin{gather*}
U^{A,B,C}_\ell(x) =\sum_{j=0}^\ell \Delta_{j,\ell,k}V_j^{(k)}(x) \, .
\end{gather*}
From orthogonality
\begin{gather}\label{n0}
\prodint{ U^{A,B,C}_\ell(x),I_N }_{u^{(k)}}=\sum_{j=0}^\ell \Delta_{j,\ell,k}\prodint {V_j^{(k)}(x),I_N }_{u^{(k)}}=\Delta_{0,\ell,k} \, .
\end{gather}
Observe that~\eqref{orto} is proved if
$ \lim_{k\to \infty} \Delta_{j,\ell,k}=
I_N \, \delta_{\ell,j}  \, $.
We proceed by induction on $\ell $. For $\ell=0$ the result is immediate since from~\eqref{n0}, $\Delta_{0,0,k}=I_N $, and for $j\neq 0$, $\Delta_{j,0,k}={\bf 0} $. Now suppose the result is valid up to $\ell \, $. From~\eqref{Chebyshev_recurrence_relation} we~get
\begin{align*}
\Delta_{j,\ell+1,k}&=\prodint{U^{A,B,C}_{\ell+1}(x),G^{(k)}_{j}(x)}_{u^{(k)}} \\
&=\prodint{A^{-1}
\big(
x \, U^{A,B,C}_\ell(x)-B \, U^{A,B,C}_{\ell}(x)- C \, U^{A,B,C}_{\ell-1}(x)
\big), G^{(k)}_{j}(x)}_{u^{(k)}} \\
&=A^{-1} \prodint{x \, U^{A,B,C}_{\ell}(x),G^{(k)}_{j}(x)}_{u^{(k)}}-A^{-1}B\Delta_{j,\ell,k}-A^{-1}C\Delta_{j,\ell-1,k} \, .
\end{align*}
On the other hand, from 
the symmetry condition
\begin{align*}
 & A^{-1} \prodint{U^{A,B,C}_{\ell}(x),x \, G^{(k)}_{j}(x)}_{u^{(k)}}
  \\
  & \phantom{olaola} =
 A^{-1} \prodint{U^{A,B,C}_{\ell}(x),C_{j+k+1}^\top G^{(k)}_{j+1}(x) + B_{j+k}^\top G^{(k)}_j (x) +A_{j+k-1}^\top G^{(k)}_{j-1} (x)}_{u^{(k)}} \\
  & \phantom{olaola} =
 A^{-1}
\big(
\Delta_{j+1,\ell,k} \, C_{k+j+1} + \Delta_{j,\ell,k} \, B_{k+j}+\Delta_{j-1,\ell,k}\, A_{k+j-1}
\big) \, .
\end{align*}
From here we get that
\begin{multline}\label{delta}
\Delta_{j,\ell+1,k}
 = A^{-1}
\big(\Delta_{j+1,\ell,k} \, C_{k+j+1}
+\Delta_{j,\ell,k} \, B_{k+j}+\Delta_{j-1,\ell,k} \, A_{k+j-1}
 \\
-B \, \Delta_{j,\ell,k}-C \, \Delta_{j,\ell-1,k}
\big) \, .
\end{multline}
Observe that for $j\leq \ell-2$ or $j\geq \ell+2$ the induction hypothesis and~\eqref{delta} show that $\lim_{k\to\infty} \Delta_{j,\ell+1,k}={\bf 0}$. Now, for $j=\ell-1$, $\ell$ and $\ell+1$ we~get
\begin{gather*}
\lim_{k\to \infty} \Delta_{\ell-1,\ell+1,k}=A^{-1}C-A^{-1}C={\bf 0} \, , \\
\lim_{k\to \infty} \Delta_{\ell,\ell+1,k}=A^{-1}B-A^{-1}B={\bf 0} \, , \\
\lim_{k\to \infty} \Delta_{\ell+1,\ell+1,k}=A^{-1}A=I_N \, .
\end{gather*}
We are now ready to prove that
\begin{gather*}
\lim_{k\to\infty}\prodint{\frac{I_N }{z-x},I_N }_{u^{(k)}}=\prodint{ \frac{I_N }{z-x},I_N }_{u^{A,B,C}} \, , \ \ z\in\mathbb C \setminus \bigcup_{k}\Gamma^{(k)} \, .
\end{gather*}
If the above is not true, then there exist an $z\in\mathbb C \setminus \bigcup_{k}\Gamma^{(k)}$ and a sequence of nonnegative integers~$\big(k_m \big)_{m\in \mathbb N} \, $, such that
\begin{gather}\label{ine}
\left\|\prodint{\frac{I_N }{z-x},I_N }_{u^{(k_m)}}-\prodint{ \frac{I_N }{z-x},I_N }_{u^{A,B,C}}\right\|_2>C>0 \, ,
\end{gather}
where $C$ is a constant. Since $ \big\{ u^{(k)} \big\}_{k \in \mathbb N} \, $ is a sequence of matrices of linear functionals with compact support contained in $\bigcup_{k}\Gamma^{(k)}$ and such that $\prodint{I_N ,I_N }_{u^{(k)}}=I_N $, then from Banach-Alaoglu's theorem, there is a subsequence
$ \big(s_m \big)_{m \in \mathbb N} \, $
from $ \big(k_m \big)_{m \in \mathbb N} \, $
such that $du^{(s_m)}$ converge to a matrix of  linear functionals $v$ with compact support contained in $\pmb{\operatorname{D}}_M$, for every matrix polynomial~$f$,~i.e.
\begin{gather*}
\lim_{m\to\infty}\prodint{f,I_N }_{u^{(s_m)}}=\prodint{f,I_N }_v \, .
\end{gather*}
In particular, if we take $f= U^{A,B,C}_\ell(x)$, then
$\prodint{ U^{A,B,C}_{\ell}(x),I_N }_{v}= I_N \, \delta_{\ell,0} \, $.
Since $ \big\{ U^{A,B,C}_n \big\}_{n \in \mathbb N} \, $ is a basis of $C^{N \times N }[x]$ and $v$, $u^{A,B,C}$ have compact support, we get 
$v \equiv u^{A,B,C}$ but the inequality~\eqref{ine} is not possible. The uniform convergence follows from Stieltjes-Vitali theorem.
\end{proof}

\begin{cor}\label{coro3}
Under the hypotesis of Theorem~\ref{teorema 6} we have that,
$
\big\{ Q_n \, Q_{n-1}^{-1} \, C_n^{-1} \big\}_{n \in \mathbb N} \, $ and
$
\big\{ A_{n-1}^{-1}R_{n-1}^{-\top} \, R^{\top}_n \big\}_{n \in \mathbb N} \, $
uniformly converges to
$
F_{A,B,C} \, $ on compact subsets \linebreak of
$ \mathbb C \setminus\bigcup_{k}\Gamma^{(k)}$.
\end{cor}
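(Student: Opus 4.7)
The plan is to reduce the corollary directly to Theorem~\ref{teorema 6} by invoking the closed-form expressions for the first moments of $u^{(k)}$ established in~\eqref{limit}, namely
\begin{gather*}
\prodint{\frac{I_N}{x-y}, I_N}_{u^{(k)}} = A_{k-1}^{-1} \, R_{k-1}^{-\top}(x) \, R_k^{\top}(x), \\
\prodint{\frac{I_N}{x-y}, I_{p}}_{u^{(k)}} = Q_k(x) \, Q_{k-1}^{-1}(x) \, C_k^{-1}.
\end{gather*}
These identities already rewrite the moments of the $k$-th associated functional as precisely the matrix ratios that appear in the statement of the corollary (with $k$ playing the role of $n$).

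Having this dictionary in hand, I would simply apply Theorem~\ref{teorema 6}: its conclusion asserts that
\begin{gather*}
\lim_{k\to\infty}\prodint{\frac{I_N}{z-x}, I_N}_{u^{(k)}} = F_{A,B,C}(z), \quad z \in \mathbb{C} \setminus \bigcup_{k}\Gamma^{(k)},
\end{gather*}
with uniform convergence on compact subsets of $\mathbb{C} \setminus \bigcup_k \Gamma^{(k)}$. Substituting the two explicit formulas from~\eqref{limit} on the left-hand side and relabelling the running index from $k$ to $n$ immediately produces the two convergence statements of Corollary~\ref{coro3}, and the uniformity of the convergence is inherited directly from Theorem~\ref{teorema 6}.

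Consequently, no genuine obstacle is anticipated; the corollary is essentially a notational restatement of Theorem~\ref{teorema 6} in terms of the quasi-ratios of second-kind functions, once the identities~\eqref{limit} are available. The only implicit verification is that the sequences $\bigl\{Q_n \, Q_{n-1}^{-1} \, C_n^{-1}\bigr\}$ and $\bigl\{A_{n-1}^{-1} \, R_{n-1}^{-\top} \, R_n^{\top}\bigr\}$ are well defined on $\mathbb{C} \setminus \bigcup_k \Gamma^{(k)}$, which follows from the Markov-type theorem together with the definition of $\Gamma^{(k)}$ in~\eqref{zer}.
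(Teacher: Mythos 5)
Your proposal is correct and is essentially the paper's intended argument: Corollary~\ref{coro3} is stated immediately after the identities~\eqref{limit} and Theorem~\ref{teorema 6} precisely because it follows by substituting those identities into the theorem's conclusion and identifying the limit with $F_{A,B,C}$. No further comment is needed.
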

Since the recurrence relations for
$\big\{ Q_n \big\}_{n\in\mathbb N} \, $
and
$\big\{ R_n \big\}_{n\in\mathbb N} \, $
can be~rewritten~as
\begin{align*}
x I_N =& A_n \, \big(
Q_{n+1}(x) \, Q^{-1}_n (x) \, C_{n+1}^{-1}
\big) \, C_{n+1} + B_n + (Q_n (x) \, Q^{-1}_{n-1}(x) \, C^{-1}_n)^{-1} \, , \\
x I_N =& A_n \, \big(
A_{n}^{-1} R^{-\top}_n (x) \, Q^{\top}_{n+1}(x)
\big) \, C_{n+1} + B_n + (A^{-1}_{n-1} R^{-\top}_{n-1}(x) \, Q^{\top}_n (x))^{-1} \, ,
\end{align*}
then the analytic function $F_{A,B,C} $ also satisfies a matrix equation
\begin{gather*}
A \, F_{A,B,C}(x) \, C \, F_{A,B,C}(x)+(B-xI_N ) \, F_{A,B,C}(x)+I_N =\pmb 0 \, .
\end{gather*}
\begin{cor}\label{coro4}
Under the hypothesis of Theorem~\ref{teorema 6} we have that
the sequences
$
\big\{ R^{-\top}_{n} \, V^{-1}_{n} \big\}_{n \in \mathbb N} \, $ and
$
\big\{ G^{-\top}_{n} \, Q^{-1}_{n} \big\}_{n \in \mathbb N} \, $
uniformly converge on compact subsets of \linebreak $ \mathbb C \setminus \bigcup_{k}\Gamma^{(k)}$ to
$
F^{-1}_{C,B,A}(x)-A \, F_{A,B,C}(x) \, C \, $.
\end{cor}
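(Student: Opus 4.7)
My plan is to reduce both asserted convergences to a single asymptotic computation and then determine the common limit by an implicit identity whose coefficients are already controlled by Theorem~\ref{teo4} and Corollary~\ref{coro3}. First, Lemma~\ref{lema1} gives $V_n(x)\,R^{\top}_n(x)=Q_n(x)\,G^{\top}_n(x)$ for every $n\in\mathbb N$, hence
\[
R^{-\top}_n V^{-1}_n \, = \, (V_n R^{\top}_n)^{-1} \, = \, (Q_n G^{\top}_n)^{-1} \, = \, G^{-\top}_n Q^{-1}_n,
\]
so the two sequences in the statement coincide and it suffices to compute $\lim_n (V_n R^{\top}_n)^{-1}$.

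Next I would apply the Liouville-Ostrogradski formula~\eqref{os2} with $n$ replaced by $n+1$, namely $V_{n+1}R^{\top}_n=A_n^{-1}+Q_{n+1}G^{\top}_n$, multiply on the left by $V_n V_{n+1}^{-1}$, and use Lemma~\ref{lema1} once more to rewrite $Q_{n+1}G^{\top}_n=Q_{n+1}Q_n^{-1}\cdot V_n R^{\top}_n$. This yields the implicit identity
\[
\bigl(I_N - V_n V_{n+1}^{-1}\,Q_{n+1}Q_n^{-1}\bigr)\, V_n R^{\top}_n \, = \, V_n V_{n+1}^{-1}A_n^{-1}.
\]
By Theorem~\ref{teo4}, $V_n V_{n+1}^{-1}A_n^{-1}\to F_{C,B,A}$, and since $A_n\to A$ we have $V_n V_{n+1}^{-1}\to F_{C,B,A}A$; by Corollary~\ref{coro3}, $Q_{n+1}Q_n^{-1}C_{n+1}^{-1}\to F_{A,B,C}$, and since $C_{n+1}\to C$ we have $Q_{n+1}Q_n^{-1}\to F_{A,B,C}C$. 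Both convergences are uniform on compact subsets of $\mathbb C\setminus\bigcup_k\Gamma^{(k)}$. Passing to the limit in the displayed identity and solving gives
\[
V_n R^{\top}_n \, \to \, \bigl(I_N - F_{C,B,A}\,A\,F_{A,B,C}\,C\bigr)^{-1}F_{C,B,A},
\]
and inverting produces the claimed
\[
R^{-\top}_n V^{-1}_n \, \to \, F^{-1}_{C,B,A}\bigl(I_N - F_{C,B,A}\,A\,F_{A,B,C}\,C\bigr) \, = \, F^{-1}_{C,B,A} - A\,F_{A,B,C}\,C,
\]
with the same formula for $G^{-\top}_n Q^{-1}_n$ by the first reduction.

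The main obstacle I anticipate is justifying the inversions uniformly, that is, showing that $I_N - F_{C,B,A}(x)\,A\,F_{A,B,C}(x)\,C$ is nonsingular on compact subsets of $\mathbb C\setminus\bigcup_k\Gamma^{(k)}$ so that matrix inversion preserves the uniform convergence. I would handle this by leveraging the quadratic functional equations $CF_{C,B,A}AF_{C,B,A}+(B-xI_N)F_{C,B,A}+I_N=\pmb 0$ and $AF_{A,B,C}CF_{A,B,C}+(B-xI_N)F_{A,B,C}+I_N=\pmb 0$ obtained after Theorem~\ref{teo4} and Corollary~\ref{coro3}, which together give the explicit identification
\[
F^{-1}_{C,B,A} - A\,F_{A,B,C}\,C \, = \, xI_N - B - C\,F_{C,B,A}\,A - A\,F_{A,B,C}\,C,
\]
manifestly an analytic matrix function on $\mathbb C\setminus\bigcup_k\Gamma^{(k)}$. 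Its invertibility there (off a possibly discrete exceptional set, which is absorbed by analytic continuation together with a Stieltjes-Vitali argument as in the proof of Theorem~\ref{teorema 6}) transfers, via continuity of matrix inversion on the open set of invertible matrices, into uniform convergence of both $R^{-\top}_n V^{-1}_n$ and $G^{-\top}_n Q^{-1}_n$ on compacts of $\mathbb C\setminus\bigcup_k\Gamma^{(k)}$.
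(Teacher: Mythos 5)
Your opening reduction is a genuine simplification over the paper: by Lemma~\ref{lema1}, $V_nR_n^{\top}=Q_nG_n^{\top}$ for every $n$, so $R_n^{-\top}V_n^{-1}=G_n^{-\top}Q_n^{-1}$ and only one limit has to be computed, whereas the paper treats the two sequences by two parallel identities. The identity $\bigl(I_N-V_nV_{n+1}^{-1}Q_{n+1}Q_n^{-1}\bigr)\,V_nR_n^{\top}=V_nV_{n+1}^{-1}A_n^{-1}$, obtained from~\eqref{os2} and Lemma~\ref{lema1}, is also correct, and the limits you quote from Theorem~\ref{teo4} and Corollary~\ref{coro3} are the right ones.

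The gap is in the step ``passing to the limit and solving''. To solve for $V_nR_n^{\top}$ and then invert you need $I_N-F_{C,B,A}(x)\,A\,F_{A,B,C}(x)\,C$ --- equivalently the limit function $F_{C,B,A}^{-1}-A\,F_{A,B,C}\,C$ itself --- to be nonsingular, with determinant bounded away from zero on the compact sets in question. You acknowledge this but do not prove it: rewriting the limit as $xI_N-B-C\,F_{C,B,A}\,A-A\,F_{A,B,C}\,C$ shows analyticity, not invertibility (its determinant could vanish; the behaviour like $x^N$ at infinity only controls the unbounded component), and the proposed Stieltjes--Vitali patch would require local uniform boundedness of $R_n^{-\top}V_n^{-1}$, which you have not established. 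The repair is to avoid solving for $V_nR_n^{\top}$ altogether: multiply your identity on the left by $A_nV_{n+1}V_n^{-1}$ and on the right by $(V_nR_n^{\top})^{-1}$ to obtain the exact formula $R_n^{-\top}V_n^{-1}=A_nV_{n+1}V_n^{-1}-A_nQ_{n+1}Q_n^{-1}$, whose limit follows at once from Theorem~\ref{teo4} (inverted --- this only needs $F_{C,B,A}$ nonsingular, which your relation $F_{C,B,A}^{-1}=xI_N-B-C\,F_{C,B,A}\,A$ does supply, uniformly on compacts) together with Corollary~\ref{coro3}. This is essentially the paper's own argument: it derives the exact identities $G_n^{-\top}Q_n^{-1}=(G_{n+1}G_n^{-1})^{\top}C_{n+1}-A_nQ_{n+1}Q_n^{-1}$ and $R_n^{-\top}V_n^{-1}=A_nV_{n+1}V_n^{-1}-(R_{n+1}R_n^{-1})^{\top}C_{n+1}$ from the confluent Christoffel--Darboux formulas and reads off the limit, with no inversion of $I_N-F_{C,B,A}\,A\,F_{A,B,C}\,C$ ever required.
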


\begin{proof}
As a consequence of Christoffel-Darboux formulas when $x=y$ 
\begin{gather*}
\big(
G_{n+1}(x) \, G_n^{-1}(x)
\big)^{\top} C_{n+1} -
A_{n} \, Q_{n+1}(x) \, Q_n^{-1}(x)=G_n(x)^{-\top}Q_n^{-1}(x) \\
A_n \, V_{n+1}(x) \, V_n^{-1}(x)-
\big( R_{n+1}(x) \, R_n^{-1}(x)\big)^{\top} C_{n+1}=R_n(x)^{-\top}V_n^{-1}(x) \, ,
\end{gather*}
from here the limit follows.
\end{proof}

 When the sesquilinear form, $\langle { .} , { .} \rangle $, is associated with a positive definite symmetric matrix of measures, $\mu$, we have the representation
 \begin{gather*}
 \prodint{P(x),Q(x)}=\int P(x) \, d\mu \, Q^{\top}(x) \, .
 \end{gather*}
 Here we have orthonormality i.e. $V_n
 \equiv G_n
 $ and they satisfy a recurrence~relation
 \begin{gather*}
 x \, V_n(x)=A_{n}V_{n+1}(x)+B_nV_n(x)+A^{\top}_{n-1}V_{n-1}(x) \, , \ \
 n\geq 0 \, ,
 \end{gather*}
with initial conditions
$ V_{-1}(x)={\bf 0} $
and
$ V_0(x)=I_N $,
$A_n$ nonsingular matrices and~$B_n$ Hermitian matrices. Thus, if
$ \big\{ V^{(k)}_{n} \big\}_{n\in\mathbb N} \, $ is the sequence of $k$-th associated
matrix polynomials which are orthonormal with respect to the matrix of measures~$d\mu^{(k)}$ and $A_n\to A$, $B_n\to B$ with $A$ a nonsingular matrix and $B$ a Hermitian matrix, then
\begin{gather*}
\lim_{k\to\infty}\int \frac{d\mu^{(k)}(x)}{z-x}=\int \frac{dW_{A^\top,B^{\top}}(x)}{z-x}
\, ,
\end{gather*}
and the convergence holds uniformly on compact subsets of $\mathbb C \setminus\bigcup_{k}\Gamma^{(k)}$.

 Here, $dW_{A^\top,B^{\top}}$ denotes the matrix of measures for which the polynomials $U_n^{A^\top,B^{\top}}(x)$ defined by the recurrence formula
\begin{gather*}
 x \, U_n^{A^\top,B^{\top}}(x)=A \, U_{n+1}^{A^\top,B^{\top}}(x)+B \, U_n^{A^\top,B^{\top}}(x) + A^\top \, U_{n-1}^{A^\top,B^{\top}}(x), \ \ n\geq 0 \, ,
\end{gather*}
 are orthonormal.

 Moreover, if we assume that the matrix $A$ is positive definite and the matrix~$B$ is Hermitian, then in~\cite{Duran1} is showed that
 \begin{multline*}
\int \frac{dW_{A^\top,B^{\top}}(x)}{z-x}=\frac{1}{2} A^{-1} (zI_N -B) \, A^{-1}
 \\ -\frac{1}{2}A^{-1/2} \,
 \big(
 \sqrt{A^{-1/2} \, (B-zI_N ) \, A^{-1} ( B-zI_N ) \, A^{-1/2}}-4I_N
 \big) \, A^{-1/2} \, ,
 \end{multline*}
for $z\notin \operatorname{supp} \, (W_{A^\top,B^{\top}}) \, $.

\end{document}